\title{\bf Some Infinite Matrices
Whose Leading Principal Minors Are Well-known Sequences}
\author{ {\sc A. R. Moghaddamfar}$^1$, {\sc S. Rahbariyan}$^1$,\\
 {\sc S. Navid Salehy}$^2$  and  {\sc S. Nima Salehy}$^2$\\[0.3cm]
{\em $^1$Department of Mathematics,}\\
{\em K. N. Toosi University of Technology,}\\[0.1cm]
{\em P. O. Box
$16315$-$1618$, Tehran, Iran}\\[0.1cm]
{\em $^2$Department of Mathematics, Florida State
University,}\\{\em
Tallahassee, FL $32306$, USA.}\\[0.1cm]
{\em E-mails}: {\tt moghadam@ipm.ir} \ and \ {\tt
moghadam@kntu.ac.ir}\\[0.2cm]{\em In memory of Professor Michael
Neumann}}
\newenvironment{proof}{\noindent {\em {Proof}}.}{$\square$
\medskip}
\newtheorem{definition}{Definition}
\newtheorem{corollary}{Corollary}
\newtheorem{theorem}{Theorem}
\newtheorem{lemma}{Lemma}
\begin{document}
\maketitle
\begin{abstract}
\noindent There are scattered results in the literature showing
that the leading principal minors of certain infinite integer
matrices form the Fibonacci and Lucas sequences. In this article,
among other results, we have obtained new families of infinite
matrices such that the leading principal minors of them form a
famous integer (sub)sequence, such as Fibonacci, Lucas, Pell and
Jacobsthal (sub)sequences.
\end{abstract}

\renewcommand{\baselinestretch}{1.1}
\def\thefootnote{ \ }
\footnotetext{{\em $2000$ Mathematics Subject Classification}:
15A15, 15A23, 15B05, 15B36, 11C20.\\
{\bf Key words.} Fibonacci sequence, Lucas sequence, Pell
sequence, Jacobsthal sequence, Determinant, Toeplitz matrix,
7-matrix, Generalized Pascal triangle, Matrix factorization,
Recurrence relation.}

\section{Notation, Definitions and Previous Results}
\indent Throughout this article, unless noted otherwise, we will
use the following notation:
\begin{itemize}
\item $\alpha=(\alpha_i)_{i\geqslant 0}$ and $\beta=(\beta_i)_{i\geqslant 0}$ are
two arbitrary sequences starting with $\alpha_0=\beta_0=\gamma$.

\item If $\alpha=(\alpha_i)_{i\geqslant 0}$, then $\tilde{\alpha}=(\tilde{\alpha}_i)_{i\geqslant 0}$
where $\tilde{\alpha}_i=(-1)^i\alpha_i$ for all $i\geqslant 0$.

\item  $P_{\alpha, \beta}(n)$ is the generalized Pascal
triangle associated with the sequences $\alpha$ and $\beta$ (see
\cite{bacher}), which is introduced as follows. In fact,
$P_{\alpha, \beta}(n)=[P_{i,j}]_{0\leqslant i, j\leqslant n}$ is a
square matrix of order $n+1$ whose $(i, j)$-entry $P_{i,j}$ obeys
the following rule:
$$P_{i,0}=\alpha_i, \  P_{0,i}=\beta_i  \ \ \  \mbox{for} \ 0\leqslant i\leqslant n,$$
and $$P_{i,j}=P_{i,j-1}+P_{i-1,j} \ \  \mbox{for} \ \ 1\leqslant
i, j\leqslant n.$$
\item $A_{\alpha, \beta}(n)=[A_{i,j}]_{0\leqslant i, j\leqslant n}$ is
the 7-matrix associated with the sequences $\alpha$ and $\beta$
of order $n+1$, whose entries satisfy
$$ A_{i,0}=\alpha_i, \
A_{0,i}=\beta_i  \ \ \  \mbox{for} \  0\leqslant i\leqslant n,$$
and $$A_{i,j}=A_{i-1,j-1}+A_{i-1,j} \ \ \mbox{for} \ \ 1\leqslant
i, j\leqslant n.$$ This class of matrices was first introduced in
\cite{chrs}. In applications, the case when $\alpha=(1, 1, 1,
\ldots)$ and $\beta=(1, 0, 0, 0, \ldots)$ is more important. In
fact, we put
 $$L(n):=A_{(1, 1, 1,
\ldots), (1, 0, 0, 0, \ldots)}(n)=[L_{i,j}]_{0\leqslant i,
j\leqslant n},$$ which is called the unipotent lower triangular
matrix of order $n+1$. An explicit formula for $(i,j)$-entry
$L_{i,j}$ is also given by the following formula
$$L_{i,j}=\left \{ \begin{array}{ccc}
0 & \mbox{if} & i<j\\[0.1cm]
{i\choose j} & \mbox{if} & i\geqslant j.
\end{array} \right.
$$ Moreover, we put $U(n)=L(n)^t$, where $A^t$ signifies the transpose of matrix
$A$.

\item  $T_{\alpha, \beta}(n)=[T_{i,j}]_{0\leqslant i, j\leqslant n}$ is
the Toeplitz matrix of order $n+1$ with $T_{i,0}=\alpha_i$ and
$T_{0, i}=\beta_i$, $0\leqslant i\leqslant n$, and $T_{i,
j}=T_{k, l}$ whenever $i-j=k-l$. In the case when $\alpha=(c, b,
b, b, \ldots)$ and $\beta=(c, a, a, a, \ldots)$, we follow the
notation in \cite{GST}, and put $T_{n+1}(a, b, c):=T_{\alpha,
\beta}(n)$. For example
$$T_3(a, b, c)=\left[\begin{array}{lll} c & a & a\\ b & c & a\\b & b & c\\\end{array}\right].$$
\item  An upper Hessenberg matrix, $H(n)=[h_{i,j}]_{0\leqslant i,j\leqslant n}$,
is a square matrix of order $n+1$ where $h_{i,j}=0$ whenever
$i>j+1$ and $h_{i,i-1}\neq 0$ for some $i$, $1\leqslant
i\leqslant n$, so we have
$$H(n)=\left ( \begin{tabular}{cccccc}
$h_{0,0}$ & $h_{0,1}$ & $h_{0,2}$ & \ \ $\ldots$ &  $h_{0,n}$  \\
$h_{1,0}$ & $h_{1,1}$ & $h_{1,2}$ & \ \ $\ddots$ & $h_{1,n}$  \\
$0$ & $h_{2,1}$ & $h_{2,2}$ & \ \ $\ddots$ & $h_{2,n}$  \\
$\ \ \vdots$ & $\ \ \ddots$ & \ \ $\ddots$ & \ \ $\ddots$ & $h_{n-1,n}$  \\
$0$ & $0$ & $0$ & $h_{n,n-1}$ & $h_{n,n}$  \\
\end{tabular} \right ).$$
\item $E_{ij}$ denotes the square matrix having 1 in the $(i,
j)$ position and 0 elsewhere.
\item  ${\rm R}_i(A)$ (resp. ${\rm C}_j(A)$) denotes the row
$i$ (resp. column $j$) of a matrix $A$.
\end{itemize}
{\sc Remark.} {\em Notice that we index all matrices in this
article beginning at the $(0,0)$-th entry.}
\begin{itemize}
\item  $\mathcal{F}_n$  is the $n$th Fibonacci number (A000045 in \cite{integer}), which
satisfies
$$\mathcal{F}_0=0, \ \mathcal{F}_1=1, \ \ \ \  \mathcal{F}_{n}=\mathcal{F}_{n-1}+\mathcal{F}_{n-2} \ \ \mbox{for} \ n\geqslant 2. $$
\item   $\mathcal{L}_n$  is the $n$th Lucas number (A000032 in \cite{integer}),
which satisfies
$$\mathcal{L}_0=2, \ \mathcal{L}_1=1, \ \ \ \  \mathcal{L}_{n}=\mathcal{L}_{n-1}+\mathcal{L}_{n-2}  \ \ \mbox{for} \ n\geqslant 2.$$
\item   $\mathcal{P}_n$  is the $n$th Pell number (A000129 in \cite{integer}),
which satisfies
$$\mathcal{P}_0=0, \ \mathcal{P}_1=1, \ \ \ \  \mathcal{P}_{n}=2\mathcal{P}_{n-1}+\mathcal{P}_{n-2}  \ \ \mbox{for} \ n\geqslant 2.$$
\item   $\mathcal{J}_n$  is the $n$th Jacobsthal number
(A001045 in \cite{integer}), which satisfies
$$\mathcal{J}_0=0, \ \mathcal{J}_1=1, \ \ \ \  \mathcal{J}_{n}=\mathcal{J}_{n-1}+2\mathcal{J}_{n-2}  \ \ \mbox{for} \ n\geqslant 2.$$
\end{itemize}
{\sc Remark.} Let $a, b, r, s$ be integers with $r, s\geqslant
1$. The {\em $(a, b, r, s)$-Gibonacci sequence} (or {\em
generalized Fibonacci sequence}),
$\mathcal{G}^{(a,b,r,s)}=(\mathcal{G}^{(a,b,r,s)}_n)_{n\geqslant
0 }$, is recursively defined by:
$$ \mathcal{G}^{(a, b, r, s)}_0=a, \ \ \mathcal{G}^{(a, b, r, s)}_1=b,$$
and $$\mathcal{G}^{(a, b, r, s)}_n=r\mathcal{G}^{(a, b, r,
s)}_{n-1}+s\mathcal{G}^{(a, b, r, s)}_{n-2},
  \ \ \mbox{for} \ n\geqslant 2.$$
The Fibonacci sequence (resp. Lucas sequence, Pell sequence, or
Jacobsthal sequence) corresponds to the case $(a, b, r, s)=(0, 1,
1, 1)$ (resp. $(2, 1, 1, 1)$, $(0, 1, 2, 1)$ or $(0, 1, 1, 2)$).

\begin{itemize}
\item Given an arbitrary sequence $\omega=(\omega_i)_{i\geqslant
0}$, we put $\tilde{\omega}=(\tilde{\omega}_i)_{i\geqslant 0}$
where $\tilde{\omega}_i=(-1)^i\omega_i$.
\item To any sequence $\alpha=(\alpha_i)_{i\geqslant 0}$, we
associate other sequences
$\hat{\alpha}=(\hat{\alpha}_{i})_{i\geqslant 0}$ and
$\check{\alpha}=(\check{\alpha}_{i})_{i\geqslant 0}$ (the
binomial and inverse binomial transforms of $\alpha$), which are
defined through the following rules:
$$\hat{\alpha}_{i}=\sum_{k=0}^{i}(-1)^{i+k}{i\choose k}\alpha_k  \ \ \
\mbox{and} \ \ \ \check{\alpha}_{i}=\sum_{k=0}^{i}{i\choose
k}\alpha_k.$$
\item  $\phi=\frac{1+\sqrt{5}}{2}$ is the golden ratio and
$\Phi=\frac{1-\sqrt{5}}{2}$ is the golden ratio conjugate.
\end{itemize}

Given an infinite matrix $A=[A_{i,j}]_{i,j\geqslant 0}$, denote by
$d_n$ ($n= 0, 1, 2, \ldots$) the $n$th leading principal minor of
$A$ defined as the determinant of the submatrix consisting of the
entries in its first $n+1$ rows and columns, i.e., $d_n=\det
[A_{i,j}]_{0\leqslant i,j\leqslant n}$. Here, we are interested in
computing the sequence of leading principal minors $(d_0, d_1,
d_2, \ldots)$, especially, in the case that $d_n$ is the $n$th
Fibonacci (Lucas, Pell or Jacobsthal) number. In \cite{mp}, we
introduced a family of matrices $A(n)=[A_{i,j}]_{0\leqslant i,
j\leqslant n}$ as follows:
$$A_{i,j}=\left\{\begin{array}{lll} 1 &  & ij=0, \\[0.2cm]
A_{i,j-1}+A_{i-1,j}+i-j &  &  1\leqslant i, j\leqslant n.
\end{array} \right.$$
The matrix $A(3)$ for instance is given by
$$ A(3)=\left (
\begin{array}{cccc} 1 & 1 & 1 & 1  \\
1 & 2 & 2 & 1  \\
1 & 4 & 6 & 6  \\
1 & 7 & 14 & 20  \\
\end{array} \right ).
$$
The leading principal minors now generate the Fibonacci numbers
$\mathcal{F}_{n+1}$ (see \cite{mp}).

In \cite{armt}, we defined two other families of matrices:
$$B(n)=[B_{i,j}]_{0\leqslant i, j\leqslant n} \ \ \mbox{and} \ \ C(n)=[C_{i,j}]_{0\leqslant i,
j\leqslant n},$$ with
$$B_{i,j}=\left\{\begin{array}{lll} j+2 &  & i=0,  j\geqslant 0, \\[0.2cm]
4B_{i-1,j}+i^2-7i-5 &  & j=0,  i\geqslant 1, \\[0.2cm]
B_{i,j-1}+B_{i-1,j}-2(i+j)&  &  1\leqslant i, j\leqslant n,
\end{array} \right.$$
and
$$C_{i,j}=\left\{\begin{array}{lll}
2-j & & i=0, j\leqslant 1, \\[0.2cm]
2C_{i, j-2}-C_{i, j-1} &  & i=0,  j\geqslant 2, \\[0.2cm]
3C_{i-1, j}+5(3^{i-1}-2i-1)/2 &  & j=0,  i\geqslant 1, \\[0.2cm]
C_{i-1,j-1}+C_{i-1,j}-2i&  &  1\leqslant i, j\leqslant n.
\end{array} \right.$$
The matrices $B(3)$ and $C(3)$, for instance, are shown below:
$$B(3)=\left (
\begin{array}{cccc}
2& 3& 4& 5 \\ -3& -4& -6& -9 \\  -27& -37&
-51& -70 \\ -125& -170& -231& -313 \\
\end{array} \right ),$$ and $$ C(3)=\left (
\begin{array}{cccc}
2& 1& 3& -1 \\
1& 1& 2& 0\\
-2& -2&-1& -2 \\
-1& -10& -9& -9 \\
\end{array} \right ).
$$
It was proved in \cite{armt} that $\det B(n)=\det
C(n)=\mathcal{L}_n$.

Recently, in \cite{mtaj}, it is determined all sequences $\alpha$
and $\beta$ which satisfy the second-order homogeneous linear
recurrence relations and for which the leading principal minors
of generalized Pascal triangle $P_{\alpha, \beta}=P_{\alpha,
\beta}(\infty)$, 7-matrix $A_{\alpha, \beta}=A_{\alpha,
\beta}(\infty)$ or Toeplitz matrix $T_{\alpha, \beta}=T_{\alpha,
\beta}(\infty)$, form the Fibonacci, Lucas, Pell and Jacobsthal
sequences.

In \cite{yang}, using generating functions, the authors proved
that any sequence can be presented in terms of a sequence of
leading principal minors of an infinite matrix. In fact,
according to their methods one can easily construct matrices whose
determinants are famous sequences. For example, they constructed
the following matrix
$$D(\infty)=\left (
\begin{array}{ccccc}
1& 1& 1& 1 &   \ldots \\
4& 1& 2& 3 &  \ldots \\
9& 0& 1& 3 &   \ldots \\
16& 0& 0& 1 &  \ldots \\
\vdots& \vdots&    \vdots& \vdots& \ddots \\
\end{array} \right ),$$
in which the first column is the sequence of the square positive
integers and the remaining part of the matrix is the truncated
Pascal's upper triangular matrix. Let $D(n)$ denote the $n+1$ by
$n+1$ upper left corner matrix of $D(\infty)$. Then $\det
D(n)=(-1)^n \mathcal{T}_{n+1}$, where $\mathcal{T}_n$ is the $n$th
triangular number, i.e., $ \mathcal{T}_n={n+1\choose 2},
n\geqslant 0$.

In \cite{GST}, in particular, it was proved that, for Toeplitz
matrices $$P_n=T_n(\phi, \Phi, 1) \ \ \ \ \mbox{and} \ \ \ \
Q_n=T_n(-\phi, -\Phi, 0),$$  $\det P_n=\mathcal{F}_{n+1}$ and
$\det Q_n=\mathcal{F}_{n-1}$.

Table 1-3 list several other infinite matrices, whose leading
principal minors form a Fibonacci or Lucas (sub)sequence. In these
tables $\mathbf{i}$ denotes $\sqrt{-1}$.

\begin{center}
{\small {\rm \bf Table 1.}  {\em Some Toeplitz matrices with determinants as Fibonacci or Lucas numbers.}} \\[0.2cm]
$\begin{array}{|l|l|l|c|} \hline
  \alpha & \beta & \det T_{\alpha, \beta}(n) & {\rm Refs.} \\
\hline
 (2, 1, 1, \ldots) & (2, -1, 0, 0, \ldots) & \mathcal{F}_{2n+3} & \cite{cahill1} \\[0.1cm]
 (2, 1, 1, \ldots) & (2, 1, 0, 0, \ldots) & \mathcal{F}_{n+3} & \cite{cahill1} \\[0.1cm]
 (1, \mathbf{i}, 0, 0, \ldots) & (1, \mathbf{i}, 0, 0, \ldots)
& \mathcal{F}_{n+2}
& \cite{bens, cahill1, cahill2, yang} \\[0.1cm]
 (1, -1, 0, 0, \ldots) & (1, 1, 0, 0, \ldots) & \mathcal{F}_{n+2} & \cite{St1} \\[0.1cm]
 (3, 1, 0, 0, \ldots) & (3, 1, 0, 0, \ldots) & \mathcal{F}_{2n+4} & \cite{bens, St1, St2} \\[0.1cm]
 (3, -1, 0, 0, \ldots) & (3, -1, 0, 0, \ldots) & \mathcal{F}_{2n+4} & \cite{St2} \\[0.1cm]
 (2, -1, 1, -1, 1, \ldots) & (2, 1, 0, 0, \ldots) & \mathcal{F}_{2n+3} &  \cite{bens} \\[0.1cm]
 (2, -1, 1, -1, 1, \ldots) & (2, -1, 0, 0, \ldots) &\mathcal{F}_{n+3} &  \cite{bens} \\[0.1cm]
 (1, \Phi, \Phi, \ldots) & (1, \phi, \phi, \ldots) &
\mathcal{F}_{n+2}
& \cite{GST} \\[0.1cm]
 (0, -\Phi, -\Phi, \ldots) & (0, -\phi, -\phi, \ldots) &
\mathcal{F}_{n}
& \cite{GST} \\[0.1cm]
\hline
\end{array}
$\\[0.5cm]
{\small {\rm \bf Table 2.}  {\em Some  matrices with determinants as Fibonacci or Lucas numbers}}. \\[0.4cm]
$\begin{array}{|l|l|l|c|} \hline
 \alpha & \beta & {\rm Determinant} & {\rm Ref.} \\
\hline  (1, \mathbf{i}, 0, \ldots) & (1, \mathbf{i}, 0, \ldots) &
\det \big(T_{\alpha, \beta}(n)+E_{11}\big)=\mathcal{L}_{n+1}
& \cite{cahill1} \\[0.1cm]
 (1, 1, 1, \ldots) & (1, -1, 0,  \ldots) & \det
\big(T_{\alpha,
\beta}(n)+\sum_{i=1}^{n}E_{ii}\big)=\mathcal{F}_{2n+2}
& \cite{cahill1} \\[0.1cm]
 (1, \mathbf{i}, 0,  \ldots) & (1, \mathbf{i}, 0,  \ldots)
& \det \big(T_{\alpha, \beta}(n)+2E_{00}\big)=\mathcal{L}_{n+2}
& \cite{by} \\[0.1cm]
\hline
\end{array}
$\\[0.5cm]
{\small {\rm \bf Table 3.}  {\em Some Pascal and 7-matrices with determinants as Fibonacci or Lucas numbers}}. \\[0.4cm]
$\begin{array}{|l|l|l|l|c|} \hline
 \gamma & \alpha & \beta & {\rm Determinant} & {\rm Ref.} \\
\hline 1 & \alpha_i=\alpha_{i-1}+c & \beta_i=\beta_{i-1}-c^{-1} &
\det
P_{\alpha, \beta}(n)=\mathcal{F}_{n+2}& \cite{mpss}\\[0.1cm]
1 & \alpha_i=\alpha_{i-1}+1 & \beta_1=0, \ \beta_i=\beta_{i+2} &
\det A_{\alpha, \beta}(n)=\mathcal{F}_{n+1} & \cite{mpss} \\[0.1cm]
1 & \alpha_i=\alpha_{i-1}-1 & \beta_1=0, \ \beta_i=\beta_{i+2} &
\det A_{\alpha, \beta}(n)=\mathcal{F}_{n+1} & \cite{mpss} \\[0.1cm]
1 & \alpha_i=\alpha_{i-1}+\mathbf{i} & \beta_1=2\mathbf{i}, \
\beta_i=\beta_{i+2} &
\det A_{\alpha, \beta}(n)=\mathcal{L}_{n+1} & \cite{mpss} \\[0.1cm]
1 & \alpha_i=\alpha_{i-1}-\mathbf{i} & \beta_1=-2\mathbf{i}, \
\beta_i=\beta_{i+2} &
\det A_{\alpha, \beta}(n)=\mathcal{L}_{n+1}  & \cite{mpss} \\[0.1cm]
\hline
\end{array}
$\\[0.5cm]
\end{center}

The purpose of this article is to find some {\em new infinite
families of matrices} whose leading principal minors form a
subsequence of the sequences $\mathcal{F}$, $\mathcal{L}$,
$\mathcal{P}$ and $\mathcal{J}$.
\section{Preliminaries}
In this section, we prove several auxiliary results to be used
later. First of all, we introduce the notion of equimodular
matrices (see \cite{zp}).

\begin{definition} Infinite matrices $A=[A_{i,j}]_{i,j\geqslant 0}$ and
$B=[B_{i,j}]_{i,j\geqslant 0}$ are equimodular if their leading
principal minors are the same, that is
$$\det [A_{i,j}]_{0\leqslant i,j\leqslant n}=\det [B_{i,j}]_{0\leqslant i,j\leqslant
n} \ \ \mbox{for all } \ n\geqslant 0.$$
\end{definition}

\begin{lemma}[Corollary 2.6, \cite{mtaj}]\label{cor-1}
Let $\alpha$ and $\beta$ be two sequences starting with a common
first term. Then, the following matrices are equimodular:
$$T_{\alpha, \beta}(\infty), \ \ T_{\beta, \alpha}(\infty), \ \ P_{\check{\alpha},
\check{\beta}}(\infty), \ \ P_{\check{\beta},
\check{\alpha}}(\infty),  \ \ A_{\check{\alpha}, {\beta}}(\infty)
\ \ {\rm and} \ \ A_{\check{\beta}, {\alpha}}(\infty).$$
\end{lemma}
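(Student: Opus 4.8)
The plan is to reduce all six matrices to the single Toeplitz matrix $T_{\alpha,\beta}(\infty)$ by exhibiting explicit factorizations with unipotent triangular factors, together with two transpose identities. The backbone is the pair of factorizations
$$A_{\check\alpha,\beta}(n)=L(n)\,T_{\alpha,\beta}(n) \qquad\text{and}\qquad P_{\check\alpha,\check\beta}(n)=L(n)\,T_{\alpha,\beta}(n)\,U(n),$$
valid for every $n$. I would prove each by verifying that the right-hand side satisfies the defining boundary values and recurrence of the left-hand side. For the first, writing $M=L(n)T_{\alpha,\beta}(n)$, the boundary checks $M_{i,0}=\sum_k\binom{i}{k}\alpha_k=\check\alpha_i$ and $M_{0,j}=\beta_j$ are immediate, while the $7$-recurrence $M_{i,j}=M_{i-1,j-1}+M_{i-1,j}$ follows from the Toeplitz shift $T_{k,j-1}=T_{k+1,j}$ combined with Pascal's identity $\binom{i-1}{k}+\binom{i-1}{k-1}=\binom{i}{k}$. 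For the second, I would multiply the first factorization on the right by $U(n)=L(n)^t$ and check that $N=A_{\check\alpha,\beta}(n)U(n)$ has first column $\check\alpha$, first row $\check\beta$, and obeys the Pascal rule $N_{i,j}=N_{i-1,j}+N_{i,j-1}$; here the $7$-recurrence of $A_{\check\alpha,\beta}$ and the binomial identity $\binom{j}{l}=\binom{j-1}{l}+\binom{j-1}{l-1}$ do the work after reindexing the sums.

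Next, I would promote these determinant factorizations to statements about \emph{every} leading principal minor. Since $L(n)$ is lower triangular and $U(n)$ upper triangular, both unipotent, partitioning into a leading block of size $m+1$ and its complement shows that the top-left $(m+1)\times(m+1)$ block of $L(n)T_{\alpha,\beta}(n)$ equals $L(m)T_{\alpha,\beta}(m)$, and that of $L(n)T_{\alpha,\beta}(n)U(n)$ equals $L(m)T_{\alpha,\beta}(m)U(m)$. Taking determinants and using $\det L(m)=\det U(m)=1$ yields that the $m$th leading principal minors of $A_{\check\alpha,\beta}$ and $P_{\check\alpha,\check\beta}$ both equal that of $T_{\alpha,\beta}$, for all $m$. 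Thus $T_{\alpha,\beta}$, $A_{\check\alpha,\beta}$ and $P_{\check\alpha,\check\beta}$ are pairwise equimodular.

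Finally, the remaining three matrices are handled by transposition. Transposing a Toeplitz matrix swaps its first row and column, so $T_{\beta,\alpha}(\infty)=T_{\alpha,\beta}(\infty)^t$; likewise the Pascal rule is symmetric in the two indices, giving $P_{\check\beta,\check\alpha}(\infty)=P_{\check\alpha,\check\beta}(\infty)^t$. Since transposition preserves all leading principal minors, these two join the equimodular class. For $A_{\check\beta,\alpha}$ I would apply the first factorization with the roles of $\alpha$ and $\beta$ interchanged, namely $A_{\check\beta,\alpha}(n)=L(n)T_{\beta,\alpha}(n)$, which is equimodular with $T_{\beta,\alpha}$ and hence with $T_{\alpha,\beta}$. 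I expect the main obstacle to be the two recurrence verifications establishing the factorizations — in particular the Pascal-rule check for $P_{\check\alpha,\check\beta}=L\,T_{\alpha,\beta}\,U$, where one must carefully reindex the binomial sums — since everything else reduces to the determinantal bookkeeping of unipotent triangular factors and the elementary behaviour of the transpose.
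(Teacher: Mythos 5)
Your proposal is correct and complete in all essentials. Note first that the paper itself gives no proof of this statement: it is imported verbatim as Corollary 2.6 of \cite{mtaj}, so there is no in-paper argument to compare against. Your route --- the factorizations $A_{\check{\alpha},\beta}(n)=L(n)\,T_{\alpha,\beta}(n)$ and $P_{\check{\alpha},\check{\beta}}(n)=L(n)\,T_{\alpha,\beta}(n)\,U(n)$ with unipotent triangular binomial factors, followed by the observation that leading principal blocks of such products are the products of the leading principal blocks, plus the transpose identities $T_{\alpha,\beta}^t=T_{\beta,\alpha}$ and $P_{\check{\alpha},\check{\beta}}^t=P_{\check{\beta},\check{\alpha}}$ --- is exactly the standard mechanism behind this family of results, and it is the same matrix-factorization technique the present paper deploys in its proof of Theorem \ref{th0} (where $T_{\tilde{\mathcal{G}},\mathcal{G}}(n)=L(n)\cdot H(n)$). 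I verified the two recurrence checks you flagged as the main obstacles: for $M=L(n)T_{\alpha,\beta}(n)$ the shift $T_{k,j-1}=T_{k+1,j}$ and Pascal's identity give $M_{i-1,j-1}+M_{i-1,j}=\sum_{l}\bigl[\binom{i-1}{l-1}+\binom{i-1}{l}\bigr]T_{l,j}=M_{i,j}$, with the boundary term harmless since $\binom{i-1}{n}=0$ for $i\leqslant n$; and for $N=A_{\check{\alpha},\beta}(n)U(n)$ one gets $N_{i,j}-N_{i,j-1}=\sum_{l}A_{i-1,l}\binom{j-1}{l}+N_{i-1,j}-N_{i-1,j-1}=N_{i-1,j}$ after splitting off the $k=0$ term and using the 7-recurrence, so the Pascal rule holds. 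Together with the uniqueness of a matrix satisfying the prescribed boundary data and recurrence, and $\det L(m)=\det U(m)=1$, all six matrices land in one equimodular class. The only cosmetic remark is that you do not need the block-partition argument: since the factorizations are proved for every $n$, the $m$th leading principal minor statement follows by simply applying them with $n=m$.
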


The following lemma, which is taken from \cite{GST}, determines
recursive and explicit formula for the leading principal minors
for special classes of Toeplitz matrices.
\begin{lemma}\label{lem1} (\cite{GST}) Let $a, b, c\in \Bbb{C}$. For each positive integer $n$, let
$T_n=T_n(a, b, c)$. Then $\det T_1=c$, $\det T_2=c^2-ab$ and for
$n\geqslant 3$,
$$\det T_n=(2c-a-b)\det T_{n-1}-(c-a)(c-b)\det T_{n-2}.$$
Moreover, an explicit formula for the case when $n\geqslant 3$,
is as follows:
$$\det T_n=\left\{\begin{array}{lll}
0 & \mbox{if} & a=b=c,\\[0.3cm]
[c+a(n-1)](c-a)^{n-1} & \mbox{if} & a=b\neq c,\\[0.3cm]
\frac{b}{b-a}(c-a)^n-\frac{a}{b-a}(c-b)^n & \mbox{if} & a\neq b.\\[0.3cm]
\end{array}\right.$$
\end{lemma}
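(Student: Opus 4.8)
The plan is to reduce the computation of $D_n := \det T_n(a,b,c)$ to a first-order inhomogeneous recurrence by one sweep of row operations, and then to clear the inhomogeneous term so as to obtain the stated second-order recurrence; the explicit formulas then follow from the elementary theory of linear recurrences. The two base cases are immediate by direct expansion: $\det T_1 = c$ and $\det T_2 = c^2 - ab$.

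For the recursive step I would fix $n \geqslant 2$ and subtract from each of the first $n-1$ rows the row immediately below it, that is, replace ${\rm R}_i$ by ${\rm R}_i - {\rm R}_{i+1}$ for $1 \leqslant i \leqslant n-1$. Because two consecutive rows of $T_n(a,b,c)$ agree except in two adjacent entries, this leaves the determinant unchanged and produces a matrix whose first $n-1$ rows are upper bidiagonal, with $c-b$ on the diagonal and $a-c$ on the superdiagonal, while the last row is still $(b, b, \ldots, b, c)$. Its first column has only two nonzero entries, namely $c-b$ at the top and $b$ at the bottom, so a cofactor expansion along that column produces exactly two terms. The minor of $c-b$ is the matrix of the same shape one size smaller, contributing $(c-b)D_{n-1}$; the minor of $b$ is lower triangular with every diagonal entry equal to $a-c$, contributing $b\,(a-c)^{n-1}$ once the cofactor sign $(-1)^{n+1}$ is absorbed. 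This yields the first-order recurrence
$$D_n = (c-b)\,D_{n-1} + b\,(c-a)^{n-1}, \qquad n \geqslant 2.$$

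To remove the geometric term I would write this identity at indices $n$ and $n-1$, multiply the second copy by $c-a$, and subtract; the terms $b\,(c-a)^{n-1}$ cancel and one is left with
$$D_n = (2c-a-b)\,D_{n-1} - (c-a)(c-b)\,D_{n-2}, \qquad n \geqslant 3,$$
which is the claimed recurrence. (As a consistency check, the transpose $T_n(a,b,c)^t = T_n(b,a,c)$ shows $D_n$ is symmetric in $a$ and $b$, matching the interchange-invariance of the recurrence.)

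Finally, the characteristic polynomial $x^2 - (2c-a-b)x + (c-a)(c-b)$ factors as $\big(x-(c-a)\big)\big(x-(c-b)\big)$, with roots $c-a$ and $c-b$. For $a \neq b$ the roots are distinct, so $D_n = P(c-a)^n + Q(c-b)^n$, and solving the linear system coming from $D_1 = c$, $D_2 = c^2 - ab$ gives $P = b/(b-a)$ and $Q = -a/(b-a)$; for $a = b \neq c$ there is a double root $c-a$, so $D_n = (A+Bn)(c-a)^n$, and matching $D_1$ and $D_2$ yields $[c+a(n-1)](c-a)^{n-1}$; and for $a = b = c$ the matrix has all entries equal, hence rank at most one, so $D_n = 0$ for $n \geqslant 2$. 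The one step demanding care is the cofactor expansion: I expect the main pitfall to be the bookkeeping of the sign $(-1)^{n+1}$ against $(a-c)^{n-1} = (-1)^{n-1}(c-a)^{n-1}$, since a slip there would corrupt the sign of the geometric term and hence the entire recurrence.
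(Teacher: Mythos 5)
Your argument is correct, and it is worth pointing out that the paper itself gives no proof of this lemma at all: it is imported from \cite{GST} as a bare citation, so your derivation is a self-contained addition rather than a variant of an argument already in the text. The sweep ${\rm R}_i\to {\rm R}_i-{\rm R}_{i+1}$ does produce the bidiagonal block you describe; the two-term cofactor expansion along the first column is right (the $(1,1)$-minor is exactly the row-reduced form of $T_{n-1}$ and so has determinant $D_{n-1}$, while the $(n,1)$-minor is lower triangular with diagonal entries $a-c$); and the telescoping step that eliminates the geometric term yields precisely the stated second-order recurrence --- the intermediate first-order recurrence $D_n=(c-b)D_{n-1}+b(c-a)^{n-1}$ checks out against $\det T_2=c^2-ab$ and against a direct expansion of $\det T_3$. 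The solution by the characteristic roots $c-a$ and $c-b$ is handled correctly in all three cases, including the degenerate rank-one case $a=b=c$. One small blemish in the prose: you say the $(n,1)$-cofactor contributes $b(a-c)^{n-1}$ ``once the cofactor sign $(-1)^{n+1}$ is absorbed,'' but after absorbing that sign the contribution is $b(c-a)^{n-1}$, since $(-1)^{n+1}(a-c)^{n-1}=(c-a)^{n-1}$; your displayed recurrence uses the correct form, so only the sentence, not the mathematics, is off by the very sign you flagged as the pitfall.
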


\begin{lemma}\label{lem2}  Let $a, c\in \Bbb{C}$. Let $\alpha=(\alpha_i)_{i\geqslant 0}$ with $\alpha_0=c$ and
for each $i\geqslant 1$, $\alpha_i=a$. Then there hold:

$(1)$  for each $i\geqslant 0$, $\check{\alpha}_i=c+a(2^i-1)$.

$(2)$ $\hat{\alpha}_0=c$ and for each $i\geqslant 1$,
$\hat{\alpha}_i=\left\{\begin{array}{ll}
a-c &  i \ \mbox{is odd,}\\[0.2cm]
c-a & i \  \mbox{is even.}\\ \end{array}\right.$
\end{lemma}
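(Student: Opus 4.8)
The plan is to compute both transforms directly from their definitions, exploiting the fact that the defining sequence is constant apart from its first term: $\alpha_0=c$ while $\alpha_k=a$ for every $k\geqslant 1$. In each sum I would isolate the anomalous $k=0$ term and then evaluate the remaining binomial sum, which now has the common factor $a$, by the binomial theorem. No matrix machinery or earlier lemma is needed; this is purely an identity for binomial/inverse-binomial transforms of a near-constant sequence.

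For part $(1)$, I would write
$$\check{\alpha}_i=\sum_{k=0}^{i}{i\choose k}\alpha_k={i\choose 0}c+a\sum_{k=1}^{i}{i\choose k},$$
and then invoke $\sum_{k=0}^{i}{i\choose k}=2^i$ to get $\sum_{k=1}^{i}{i\choose k}=2^i-1$, yielding $\check{\alpha}_i=c+a(2^i-1)$. The boundary value $i=0$ is consistent automatically, since the stray-term expression reduces to $\check{\alpha}_0=c=c+a(2^0-1)$.

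For part $(2)$ the same splitting gives $\hat{\alpha}_i=(-1)^i c+a\sum_{k=1}^{i}(-1)^{i+k}{i\choose k}$, and the decisive ingredient is the alternating identity $\sum_{k=0}^{i}(-1)^k{i\choose k}=(1-1)^i=0$ for $i\geqslant 1$. Pulling out $(-1)^i$ and removing the $k=0$ contribution turns this into $\sum_{k=1}^{i}(-1)^{i+k}{i\choose k}=(-1)^{i+1}$, so that $\hat{\alpha}_i=(-1)^i c+a(-1)^{i+1}=(-1)^i(c-a)$; reading off the sign of $(-1)^i$ then splits the answer into $c-a$ for even $i$ and $a-c$ for odd $i$, while $\hat{\alpha}_0=c$ is immediate. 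I do not expect a genuine obstacle: everything rests on the two standard evaluations $\sum_k{i\choose k}=2^i$ and $\sum_k(-1)^k{i\choose k}=0$. The only points requiring mild care are the sign bookkeeping of the factor $(-1)^{i+k}$ when the $k=0$ term is detached, and the observation that the alternating identity holds only for $i\geqslant 1$, which is exactly why the case $i=0$ must be recorded separately in the statement.
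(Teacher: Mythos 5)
Your proof is correct and matches the paper's own argument: the paper likewise splits off the $k=0$ term and evaluates the remaining sums via $\sum_{k}{i\choose k}=2^i$ and $\sum_{k}(-1)^{i+k}{i\choose k}=0$, arriving at $\check{\alpha}_i=c+a(2^i-1)$ and $\hat{\alpha}_i=(-1)^i(c-a)$. No differences worth noting.
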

\begin{proof}
The following easy computations
$$\check{\alpha}_i=\sum_{k=0}^{i}{i\choose
k}\alpha_k=c+a\sum_{k=1}^{i}{i\choose k}=c+a(2^i-1),$$ and
$$\begin{array}{lll} \hat{\alpha}_i&=&\sum_{k=0}^{i}(-1)^{i+k}{i\choose
k}\alpha_k=(-1)^ic+a\sum_{k=1}^{i}(-1)^{i+k}{i\choose k}\\[0.5cm]
&=&(-1)^i(c-a)=\left\{\begin{array}{ll}
a-c &  i \ \mbox{is odd,}\\[0.2cm]
c-a & i \  \mbox{is even,}\\ \end{array}\right.\end{array}$$
conclude the results.
\end{proof}

Unless noted otherwise, we use the following notation in this
section:
\begin{itemize}
\item $\mathcal{G}=\mathcal{G}^{(a,b,r,1)}$, where $a, b$ and $r$ are
integers with $r\geqslant 1$,

\item $\bar{\mathcal{G}}=\mathcal{G}^{(0,1,r,1)}$, where $r\geqslant 1$ is
an integer. Moreover, we assume that $\bar{\mathcal{G}}_{-1}=1$.
\end{itemize}

\begin{lemma}\label{lem4}
The sequences
$\check{\mathcal{G}}=(\check{\mathcal{G}}_i)_{i\geqslant 0}$ and
$\check{\tilde{\mathcal{G}}}=(\check{\tilde{\mathcal{G}}}_i)_{i\geqslant
0}$ satisfy linear recursions of order $2$. More precisely, we
have
\begin{equation}\label{e1-new-2012}
\left\{\begin{array}{l} \check{\mathcal{G}}_0=\mathcal{G}_0=a,
 \ \ \ \check{\mathcal{G}}_1=\mathcal{G}_0+\mathcal{G}_1=a+b, \\[0.2cm]
\check{\mathcal{G}}_i=(r+2)\check{\mathcal{G}}_{i-1}-r\check{\mathcal{G}}_{i-2},
\ \ \ i\geqslant 2,
\end{array} \right.
\end{equation}
and
\begin{equation}\label{e2-new-2012}
\left\{\begin{array}{l} \check{\tilde{\mathcal{G}}}_0=\mathcal{G}_0=a, \ \ \ \check{\tilde{\mathcal{G}}}_1=\mathcal{G}_0-\mathcal{G}_1=a-b, \\[0.2cm]
\check{\tilde{\mathcal{G}}}_i=(2-r)\check{\tilde{\mathcal{G}}}_{i-1}+r\check{\tilde{\mathcal{G}}}_{i-2},
\ \ \ i\geqslant 2.
\end{array} \right.
\end{equation}
\end{lemma}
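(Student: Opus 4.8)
The plan is to determine the characteristic polynomials of $\check{\mathcal{G}}$ and $\check{\tilde{\mathcal{G}}}$ by tracking what the binomial transform does to the Binet closed form of a second-order sequence. Since $s=1$, the sequence $\mathcal{G}=\mathcal{G}^{(a,b,r,1)}$ obeys $\mathcal{G}_n=r\mathcal{G}_{n-1}+\mathcal{G}_{n-2}$, whose characteristic polynomial is $x^2-rx-1$. Because $r\geqslant 1$ forces the discriminant $r^2+4$ to be positive, this polynomial has two distinct roots $\mu_{\pm}=\tfrac{r\pm\sqrt{r^2+4}}{2}$, so one may write $\mathcal{G}_n=A\mu_+^n+B\mu_-^n$ for constants $A,B$ depending on $a,b$; here $\mu_++\mu_-=r$ and $\mu_+\mu_-=-1$. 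The two initial values in each part of the statement, namely $\check{\mathcal{G}}_0=\mathcal{G}_0=a$, $\check{\mathcal{G}}_1=\mathcal{G}_0+\mathcal{G}_1=a+b$ and $\check{\tilde{\mathcal{G}}}_0=a$, $\check{\tilde{\mathcal{G}}}_1=\mathcal{G}_0-\mathcal{G}_1=a-b$, are read off immediately from the definition of the binomial transform, so the content is entirely in the recursions.

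First I would handle \eqref{e1-new-2012}. Applying the binomial transform termwise and invoking the binomial theorem gives
$$\check{\mathcal{G}}_i=\sum_{k=0}^{i}\binom{i}{k}\mathcal{G}_k=A\sum_{k=0}^{i}\binom{i}{k}\mu_+^k+B\sum_{k=0}^{i}\binom{i}{k}\mu_-^k=A(1+\mu_+)^i+B(1+\mu_-)^i.$$
Thus $\check{\mathcal{G}}$ is a linear combination of the $i$-th powers of $1+\mu_+$ and $1+\mu_-$, hence satisfies the second-order recurrence whose characteristic roots are these two numbers. By Vieta their sum is $(1+\mu_+)+(1+\mu_-)=2+r$ and their product is $(1+\mu_+)(1+\mu_-)=1+(\mu_++\mu_-)+\mu_+\mu_-=1+r-1=r$, so the characteristic polynomial is $x^2-(r+2)x+r$, which is exactly the recursion $\check{\mathcal{G}}_i=(r+2)\check{\mathcal{G}}_{i-1}-r\check{\mathcal{G}}_{i-2}$ of \eqref{e1-new-2012}.

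For \eqref{e2-new-2012} I would repeat the argument after noting that $\tilde{\mathcal{G}}_n=(-1)^n\mathcal{G}_n=A(-\mu_+)^n+B(-\mu_-)^n$, so $\tilde{\mathcal{G}}$ has characteristic roots $-\mu_{\pm}$. The same binomial-theorem computation then yields $\check{\tilde{\mathcal{G}}}_i=A(1-\mu_+)^i+B(1-\mu_-)^i$, with characteristic roots $1-\mu_{\pm}$. Their sum is $2-(\mu_++\mu_-)=2-r$ and their product is $1-(\mu_++\mu_-)+\mu_+\mu_-=1-r-1=-r$, giving characteristic polynomial $x^2-(2-r)x-r$ and hence the recursion $\check{\tilde{\mathcal{G}}}_i=(2-r)\check{\tilde{\mathcal{G}}}_{i-1}+r\check{\tilde{\mathcal{G}}}_{i-2}$, as claimed.

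The only genuinely delicate point is the Binet representation $\mathcal{G}_n=A\mu_+^n+B\mu_-^n$, which requires $\mu_+\neq\mu_-$, i.e. $r^2+4\neq 0$; this is guaranteed here. A root-free alternative that sidesteps even this hypothesis is to use ordinary generating functions together with the identity $\check{A}(x)=\frac{1}{1-x}A\!\left(\frac{x}{1-x}\right)$ relating a sequence to its binomial transform: substituting $G(x)=\frac{a+(b-ra)x}{1-rx-x^2}$ (and $G(-x)$ for the tilde case) and simplifying produces rational functions with denominators $1-(r+2)x+rx^2$ and $1-(2-r)x-rx^2$, from which the recursions are read off directly, the linear numerators encoding the initial data. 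I expect the substitute-and-simplify algebra of this second route to be the most error-prone step, so I would use it mainly as an independent cross-check of the Vieta computation above.
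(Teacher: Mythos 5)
Your proposal is correct and follows essentially the same route as the paper: both write $\mathcal{G}_n$ in Binet form via the roots of $x^2-rx-1$ and use the binomial theorem to show that the binomial transform shifts the characteristic roots to $1+\mu_\pm$ (resp.\ $1-\mu_\pm$ for the tilde case). The only cosmetic difference is that you conclude via Vieta's formulas where the paper substitutes directly into the recurrence, and you explicitly note the distinct-roots hypothesis that the paper leaves implicit.
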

\begin{proof}
By the definition, we have the following recurrence relation
\begin{equation}\label{eee-qqq1}
\mathcal{G}_n=r\mathcal{G}_{n-1}+\mathcal{G}_{n-2}, \end{equation}
which can be written as
$$\mathcal{G}_n-r\mathcal{G}_{n-1}-\mathcal{G}_{n-2}=0.$$
Its characteristic equation is $$x^2-rx-1=0,$$ and we denote by
$t_1$ and $t_2$ the characteristic roots. Now, the general
solution of Eq. (\ref{eee-qqq1}) is given by
\begin{equation}\label{eee-qqq2}
\mathcal{G}_i=C_1t_1^i+C_2t_2^i,
\end{equation} where $C_1$ and $C_2$
are constants. In order to verify Eq. (\ref{e1-new-2012}), we
first notice that, for every $i\geqslant 0$:
$$
\begin{array}{lll}
\check{\mathcal{G}}_i=\sum_{k=0}^i{i \choose k} \mathcal{G}_k & =
& \sum_{k=0}^i{i \choose k}
[C_1t_1^k+C_2t_2^k] \ \ \ \  \ \mbox{(by Eq. (\ref{eee-qqq2}))} \\[0.2cm]
& = & C_1(t_1+1)^i+C_2(t_2+1)^i.
\end{array}
$$
Therefore, we verify Eq. (\ref{e1-new-2012}) by a direct
calculation. Clearly $\check{\mathcal{G}}_0=\mathcal{G}_0=a$ and
$\check{\mathcal{G}}_1=\mathcal{G}_0+\mathcal{G}_1=a+b$. Let us
for the moment assume that $i\geqslant 2$. Then, we have
$${\small
\begin{array}{l}
(r+2)\check{\mathcal{G}}_{i-1}-r\check{\mathcal{G}}_{i-2} \\[0.2cm]
=(r+2)[C_1(t_1+1)^{i-1}+C_2(t_2+1)^{i-1}]-r[C_1(t_1+1)^{i-2}+C_2(t_2+1)^{i-2}]
\\[0.2cm]
=C_1(t_1+1)^{i-2}[(r+2)(t_1+1)-r]+C_2(t_2+1)^{i-2}[(r+2)(t_2+1)-r] \\[0.2cm]
 =C_1(t_1+1)^{i-2}[rt_1+2t_1+2]+C_2(t_2+1)^{i-2}[rt_2+2t_2+2] \\[0.2cm]
 =C_1(t_1+1)^{i-2}[(rt_1+1)+2t_1+1]+C_2(t_2+1)^{i-2}[(rt_2+1)+2t_2+1] \\[0.2cm]
 =C_1(t_1+1)^{i}+C_2(t_2+1)^{i}  \ \ \ \mbox{(note that $rt_1+1=t_1^2$ and $rt_2+1=t_2^2$.) }\\[0.2cm]
 =\check{\mathcal{G}}_i,
\end{array}}
$$
as required.

The second part (i.e., the equalities in  Eq.
(\ref{e2-new-2012})) is similar and we leave it to the
reader.\end{proof}
\begin{lemma}\label{lem5} If $n\geqslant 1$, then $\mathcal{G}_n=a\bar{\mathcal{G}}_{n-1}+b\bar{\mathcal{G}}_n$.
\end{lemma}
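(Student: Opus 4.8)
The plan is to argue by induction on $n$, playing the two second-order recurrences against each other. Recall that $\mathcal{G}_n = r\mathcal{G}_{n-1} + \mathcal{G}_{n-2}$ for $n \geq 2$ and that $\bar{\mathcal{G}}_n = r\bar{\mathcal{G}}_{n-1} + \bar{\mathcal{G}}_{n-2}$ for $n \geq 2$; the key preliminary observation is that the adopted convention $\bar{\mathcal{G}}_{-1} = 1$ extends the latter recurrence down to the index $n = 1$ as well, since $r\bar{\mathcal{G}}_0 + \bar{\mathcal{G}}_{-1} = r\cdot 0 + 1 = 1 = \bar{\mathcal{G}}_1$. With this in hand I would state the claim $P(n)\colon \mathcal{G}_n = a\bar{\mathcal{G}}_{n-1} + b\bar{\mathcal{G}}_n$ for all $n \geq 0$ (the case $n=0$ reading $\mathcal{G}_0 = a\bar{\mathcal{G}}_{-1} + b\bar{\mathcal{G}}_0 = a$, which holds precisely because of the convention), and then prove it inductively; restricting to $n \geq 1$ recovers the stated form.

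First I would dispose of the base cases $P(0)$ and $P(1)$: the former is $\mathcal{G}_0 = a\bar{\mathcal{G}}_{-1} + b\bar{\mathcal{G}}_0 = a$, and the latter is $\mathcal{G}_1 = a\bar{\mathcal{G}}_0 + b\bar{\mathcal{G}}_1 = a\cdot 0 + b\cdot 1 = b$, both immediate from the definitions. For the inductive step I would assume $P(k)$ for all $0 \leq k \leq n$ with $n \geq 1$ and consider $n+1 \geq 2$. Expanding by the recurrence for $\mathcal{G}$ and substituting the hypotheses $P(n)$ and $P(n-1)$ gives
\[
\mathcal{G}_{n+1} = r\mathcal{G}_n + \mathcal{G}_{n-1} = r\bigl(a\bar{\mathcal{G}}_{n-1} + b\bar{\mathcal{G}}_n\bigr) + \bigl(a\bar{\mathcal{G}}_{n-2} + b\bar{\mathcal{G}}_{n-1}\bigr),
\]
and regrouping the coefficients of $a$ and $b$, then applying the recurrence for $\bar{\mathcal{G}}$, yields
\[
\mathcal{G}_{n+1} = a\bigl(r\bar{\mathcal{G}}_{n-1} + \bar{\mathcal{G}}_{n-2}\bigr) + b\bigl(r\bar{\mathcal{G}}_n + \bar{\mathcal{G}}_{n-1}\bigr) = a\bar{\mathcal{G}}_n + b\bar{\mathcal{G}}_{n+1},
\]
which is exactly $P(n+1)$ and closes the induction.

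The argument is entirely routine and linear, so I do not expect a genuine obstacle; the only point demanding care is the smallest instance of the step, $n+1 = 2$, where $P(n-1) = P(0)$ is invoked and the term $\bar{\mathcal{G}}_{n-2} = \bar{\mathcal{G}}_{-1}$ appears. There the identity $r\bar{\mathcal{G}}_{n-1} + \bar{\mathcal{G}}_{n-2} = \bar{\mathcal{G}}_n$ reads $r\bar{\mathcal{G}}_0 + \bar{\mathcal{G}}_{-1} = \bar{\mathcal{G}}_1$, and it is legitimate only because of the boundary convention $\bar{\mathcal{G}}_{-1} = 1$. Keeping track of this single boundary index is the whole of the subtlety; an alternative route, should one prefer to avoid the convention entirely, is to take $P(1)$ and $P(2)$ as base cases and run the step only for $n \geq 2$, at the cost of an extra explicit computation of $P(2)$. (One could also deduce the identity from the Binet-type expression $\mathcal{G}_i = C_1 t_1^i + C_2 t_2^i$ used in Lemma~\ref{lem4}, but the induction above is shorter.)
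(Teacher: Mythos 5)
Your proof is correct and follows essentially the same route as the paper's: induction on $n$, expanding $\mathcal{G}_{n+1}$ via its recurrence, substituting the two previous instances of the identity, and regrouping so the recurrence for $\bar{\mathcal{G}}$ reassembles the coefficients of $a$ and $b$. If anything, you are slightly more careful than the paper at the smallest step of the induction, where the convention $\bar{\mathcal{G}}_{-1}=1$ is needed; the paper's write-up glosses over this boundary case.
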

\begin{proof} We will proceed by induction with respect to $n$. The case $n=1$ is trivial:
$$ a\bar{\mathcal{G}}_{0}+b\bar{\mathcal{G}}_1=a\cdot 0+b\cdot 1=b=\mathcal{G}_1.$$
Assume the identity is true when $1\leqslant n<k$, that is,
\begin{equation}\label{lem5-iden7}
\mathcal{G}_n=a\bar{\mathcal{G}}_{n-1}+b\bar{\mathcal{G}}_n, \ \
n=1, 2, 3, \ldots, k-1.
\end{equation} We now prove that it is also true for
$n=k$. Easy calculations show that
$$\begin{array}{lll} \mathcal{G}_k=
r\mathcal{G}_{k-1}+\mathcal{G}_{k-2}& = &
r(a\bar{\mathcal{G}}_{k-2}+b\bar{\mathcal{G}}_{k-1})+(a\bar{\mathcal{G}}_{k-3}+b\bar{\mathcal{G}}_{k-2})\\[0.2cm]
& &   \mbox{(by the induction hypothesis, Eq. (\ref{lem5-iden7}))}\\[0.2cm]
& = &
a(r\bar{\mathcal{G}}_{k-2}+\bar{\mathcal{G}}_{k-3})+b(r\bar{\mathcal{G}}_{k-1}+\bar{\mathcal{G}}_{k-2})
\\[0.2cm]
& = & a\bar{\mathcal{G}}_{k-1}+b\bar{\mathcal{G}}_{k},
\end{array}
$$
which completes the proof.
\end{proof}
\begin{lemma}\label{lem6} If $m, n\geqslant 0$, then
\begin{equation}\label{lem6-iden8}
\bar{\mathcal{G}}_{n+m}=\bar{\mathcal{G}}_{n}\bar{\mathcal{G}}_{m-1}+\bar{\mathcal{G}}_{n+1}\bar{\mathcal{G}}_{m},
\end{equation}
and
\begin{equation}\label{lem6-iden9}
\bar{\mathcal{G}}_{m+1}\bar{\mathcal{G}}_{m-1}-\bar{\mathcal{G}}_{m}^2=(-1)^m.
\end{equation}
\end{lemma}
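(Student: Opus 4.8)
The plan is to recognize both identities as standard second-order-recurrence identities and to prove each by induction, using throughout the recurrence $\bar{\mathcal{G}}_k = r\bar{\mathcal{G}}_{k-1} + \bar{\mathcal{G}}_{k-2}$ together with the convention $\bar{\mathcal{G}}_{-1} = 1$. It is worth noting at the outset that this convention is exactly what keeps the recurrence valid at $k = 1$, since $r\bar{\mathcal{G}}_0 + \bar{\mathcal{G}}_{-1} = 0 + 1 = \bar{\mathcal{G}}_1$; this is the fact that will cover the boundary cases below.

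For the addition formula (\ref{lem6-iden8}) I would fix $n \geqslant 0$ and induct on $m$. The base cases $m = 0$ and $m = 1$ are immediate: when $m = 0$ the right-hand side is $\bar{\mathcal{G}}_n\bar{\mathcal{G}}_{-1} + \bar{\mathcal{G}}_{n+1}\bar{\mathcal{G}}_0 = \bar{\mathcal{G}}_n$, and when $m = 1$ it is $\bar{\mathcal{G}}_n\bar{\mathcal{G}}_0 + \bar{\mathcal{G}}_{n+1}\bar{\mathcal{G}}_1 = \bar{\mathcal{G}}_{n+1}$, since $\bar{\mathcal{G}}_0 = 0$ and $\bar{\mathcal{G}}_{-1} = \bar{\mathcal{G}}_1 = 1$. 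For the inductive step with $m \geqslant 2$, I would expand $\bar{\mathcal{G}}_{n+m} = r\bar{\mathcal{G}}_{n+(m-1)} + \bar{\mathcal{G}}_{n+(m-2)}$, substitute the induction hypothesis for the cases $m-1$ and $m-2$, and regroup by factoring out $\bar{\mathcal{G}}_n$ and $\bar{\mathcal{G}}_{n+1}$. The two resulting coefficients are $r\bar{\mathcal{G}}_{m-2} + \bar{\mathcal{G}}_{m-3} = \bar{\mathcal{G}}_{m-1}$ and $r\bar{\mathcal{G}}_{m-1} + \bar{\mathcal{G}}_{m-2} = \bar{\mathcal{G}}_m$, which is precisely the right-hand side of (\ref{lem6-iden8}). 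The only delicate point is the term $\bar{\mathcal{G}}_{m-3}$ when $m = 2$: there the identity $\bar{\mathcal{G}}_{m-1} = r\bar{\mathcal{G}}_{m-2} + \bar{\mathcal{G}}_{m-3}$ reduces to $\bar{\mathcal{G}}_1 = r\bar{\mathcal{G}}_0 + \bar{\mathcal{G}}_{-1}$, which holds exactly by the convention just noted.

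For the Cassini-type identity (\ref{lem6-iden9}) I would set $D_m := \bar{\mathcal{G}}_{m+1}\bar{\mathcal{G}}_{m-1} - \bar{\mathcal{G}}_m^2$ and prove the sign-flip relation $D_{m+1} = -D_m$: substituting $\bar{\mathcal{G}}_{m+2} = r\bar{\mathcal{G}}_{m+1} + \bar{\mathcal{G}}_m$ into $D_{m+1} = \bar{\mathcal{G}}_{m+2}\bar{\mathcal{G}}_m - \bar{\mathcal{G}}_{m+1}^2$ and then using $\bar{\mathcal{G}}_{m+1} - r\bar{\mathcal{G}}_m = \bar{\mathcal{G}}_{m-1}$ collapses the expression to $-(\bar{\mathcal{G}}_{m+1}\bar{\mathcal{G}}_{m-1} - \bar{\mathcal{G}}_m^2) = -D_m$. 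Since $D_0 = \bar{\mathcal{G}}_1\bar{\mathcal{G}}_{-1} - \bar{\mathcal{G}}_0^2 = 1$, iterating gives $D_m = (-1)^m$ as claimed. A slicker alternative is to verify by induction that $\left(\begin{smallmatrix} \bar{\mathcal{G}}_{m+1} & \bar{\mathcal{G}}_m \\ \bar{\mathcal{G}}_m & \bar{\mathcal{G}}_{m-1}\end{smallmatrix}\right) = \left(\begin{smallmatrix} r & 1 \\ 1 & 0\end{smallmatrix}\right)^m$ and then take determinants, the matrix on the right having determinant $-1$. I expect no real obstacle in either part; the genuine subtlety is purely the bookkeeping at the boundary indices involving $\bar{\mathcal{G}}_{-1}$, and one should confirm that the recurrence is legitimately available there before each use.
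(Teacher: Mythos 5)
Your proof is correct, but it takes a genuinely different route from the paper's. The paper proves both identities at once through the $Q$-matrix device: it sets $A=\left[\begin{smallmatrix} r & 1\\ 1 & 0\end{smallmatrix}\right]$, shows by induction that $A^m=\left[\begin{smallmatrix} \bar{\mathcal{G}}_{m+1} & \bar{\mathcal{G}}_m\\ \bar{\mathcal{G}}_m & \bar{\mathcal{G}}_{m-1}\end{smallmatrix}\right]$ for $m\geqslant 1$, and then reads off the addition formula (\ref{lem6-iden8}) by comparing the $(0,1)$-entries of $A^{n+m}=A^n\cdot A^m$ and the Cassini-type identity (\ref{lem6-iden9}) by taking determinants (the cases $m=0$ or $n=0$ being dispatched separately as ``straightforward''). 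You instead prove (\ref{lem6-iden8}) by a direct two-step induction on $m$ with $n$ fixed, and (\ref{lem6-iden9}) by the sign-flip recursion $D_{m+1}=-D_m$; the matrix argument appears in your write-up only as a remarked alternative for the second identity. Both arguments are sound. The paper's approach buys economy and uniformity -- one inductive claim yields both identities simultaneously, and the determinant computation for (\ref{lem6-iden9}) is a one-liner -- while yours is more elementary (no auxiliary matrix) and, by building the convention $\bar{\mathcal{G}}_{-1}=1$ into the base cases $m=0,1$, covers the boundary cases within the induction rather than splitting them off; your explicit check that the recurrence remains valid at $k=1$ under this convention is exactly the bookkeeping the paper leaves implicit.
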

\begin{proof} If $m=0$ or $n=0$, then the proof is straightforward. Therefore, we may assume that $m, n\geqslant1$. Put
$$A:=\left[ \begin{array}{ll}
\bar{\mathcal{G}}_2 & \bar{\mathcal{G}}_1\\[0.2cm]
\bar{\mathcal{G}}_1 & \bar{\mathcal{G}}_0  \end{array}
\right]=\left[ \begin{array}{ll}
r & 1\\[0.2cm]
1 & 0  \end{array} \right].$$ We claim that $$ A^m=\left[
\begin{array}{ll}
\bar{\mathcal{G}}_{m+1} & \bar{\mathcal{G}}_m\\[0.2cm]
\bar{\mathcal{G}}_m & \bar{\mathcal{G}}_{m-1}  \end{array}
\right], \ \ \ \ m\geqslant 1.$$  To prove this we will proceed by
induction with respect to $m$. It is clear for $m=1$. Suppose
that the claim is true for $m-1$. Our task is to show that it is
also true for $m$. Again, we verify the result by a direct
calculation:
$$\begin{array}{lll} A^m = A\cdot A^{m-1}& = & \left[ \begin{array}{ll}
r & 1\\[0.2cm]
1 & 0  \end{array} \right]\cdot \left[
\begin{array}{ll}
\bar{\mathcal{G}}_{m} & \bar{\mathcal{G}}_{m-1}\\[0.2cm]
\bar{\mathcal{G}}_{m-1} & \bar{\mathcal{G}}_{m-2}  \end{array}
\right]\\[0.5cm] & &  \mbox{(by the induction hypothesis)}\\[0.2cm]
& = &   \left[
\begin{array}{ll}
r\bar{\mathcal{G}}_{m}+\bar{\mathcal{G}}_{m-1} & r\bar{\mathcal{G}}_{m-1}+\bar{\mathcal{G}}_{m-2}\\[0.2cm]
\bar{\mathcal{G}}_{m} & \bar{\mathcal{G}}_{m-1}  \end{array}
\right]  \\[0.5cm]
& = & \left[
\begin{array}{ll}
\bar{\mathcal{G}}_{m+1} & \bar{\mathcal{G}}_m\\[0.2cm]
\bar{\mathcal{G}}_m & \bar{\mathcal{G}}_{m-1}  \end{array}
\right],
\end{array}
$$
as claimed.

Now, we have $A^{n+m}=A^n\cdot A^m$, that is
$$ \left[
\begin{array}{ll}
\bar{\mathcal{G}}_{n+m+1} & \bar{\mathcal{G}}_{n+m}\\[0.2cm]
\bar{\mathcal{G}}_{n+m} & \bar{\mathcal{G}}_{n+m-1} \end{array}
\right]=\left[
\begin{array}{ll}
\bar{\mathcal{G}}_{n+1} & \bar{\mathcal{G}}_{n}\\[0.2cm]
\bar{\mathcal{G}}_{n} & \bar{\mathcal{G}}_{n-1}  \end{array}
\right]\cdot \left[
\begin{array}{ll}
\bar{\mathcal{G}}_{m+1} & \bar{\mathcal{G}}_{m}\\[0.2cm]
\bar{\mathcal{G}}_{m} & \bar{\mathcal{G}}_{m-1}  \end{array}
\right],$$  or equivalently
$$ \left[
\begin{array}{ll}
\bar{\mathcal{G}}_{n+m+1} & \bar{\mathcal{G}}_{n+m}\\[0.2cm]
\bar{\mathcal{G}}_{n+m} & \bar{\mathcal{G}}_{n+m-1} \end{array}
\right]=\left[
\begin{array}{ll}
\bar{\mathcal{G}}_{n+1}\bar{\mathcal{G}}_{m+1}+\bar{\mathcal{G}}_{n}\bar{\mathcal{G}}_{m} &
\bar{\mathcal{G}}_{n+1}\bar{\mathcal{G}}_{m}+\bar{\mathcal{G}}_{n}\bar{\mathcal{G}}_{m-1}\\[0.2cm]
\bar{\mathcal{G}}_{n}\bar{\mathcal{G}}_{m+1}+\bar{\mathcal{G}}_{n-1}\bar{\mathcal{G}}_{m}
&
\bar{\mathcal{G}}_{n}\bar{\mathcal{G}}_{m}+\bar{\mathcal{G}}_{n-1}\bar{\mathcal{G}}_{m-1}
\end{array} \right].$$
Comparing $(0,1)$-entries on both sides of this equation yields
$$ \bar{\mathcal{G}}_{n+m}=\bar{\mathcal{G}}_{n+1}\bar{\mathcal{G}}_{m}+
\bar{\mathcal{G}}_{n}\bar{\mathcal{G}}_{m-1},$$
which is the Eq. (\ref{lem6-iden8}).

Moreover, since $\det A=-1$, $\det A^m=(\det A)^m=(-1)^m$, or
equivalently
$$\det\left[
\begin{array}{ll}
\bar{\mathcal{G}}_{m+1} & \bar{\mathcal{G}}_{m}\\[0.2cm]
\bar{\mathcal{G}}_{m} & \bar{\mathcal{G}}_{m-1}  \end{array}
\right]=(-1)^m,$$ and this immediately implies the following:
$$\bar{\mathcal{G}}_{m+1}\bar{\mathcal{G}}_{m-1}-\bar{\mathcal{G}}_{m}^2=(-1)^m,
$$  which is the Eq. (\ref{lem6-iden9}). This completes the proof of the lemma. \end{proof}

An immediate consequence of Lemma \ref{lem6} is the following,
which contains a list of well-known identities.

\begin{corollary}\label{coro-lemma-6} If $m, n\geqslant 0$, then the
following identities hold:
\begin{itemize}
\item[$(i)$]
$\mathcal{F}_n\mathcal{F}_{m-1}+\mathcal{F}_{n+1}\mathcal{F}_m=\mathcal{F}_{m+n}$,
\item[$(ii)$]
$\mathcal{F}_{m+1}\mathcal{F}_{m-1}-\mathcal{F}_{m}^2=(-1)^m$, \
\ \ \mbox{(Cassini's identity)}
\item[$(iii)$]
$\mathcal{P}_n\mathcal{P}_{m-1}+\mathcal{P}_{n+1}\mathcal{P}_m=\mathcal{P}_{m+n}$,
\item[$(iv)$]
$\mathcal{P}_{m+1}\mathcal{P}_{m-1}-\mathcal{P}_{m}^2=(-1)^m$. \
\ \ \mbox{(Simpson's identity)}
\end{itemize}
\end{corollary}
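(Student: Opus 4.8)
The plan is to obtain all four identities as direct specializations of Lemma \ref{lem6}, which already establishes the two master identities \eqref{lem6-iden8} and \eqref{lem6-iden9} for the whole family $\bar{\mathcal{G}}=\mathcal{G}^{(0,1,r,1)}$. The only real work is to recognize that both the Fibonacci and the Pell sequences sit inside this family for appropriate choices of the parameter $r$, and then to read off the stated formulas verbatim.

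First I would verify the two identifications. By definition $\bar{\mathcal{G}}$ satisfies $\bar{\mathcal{G}}_0=0$, $\bar{\mathcal{G}}_1=1$, and $\bar{\mathcal{G}}_n=r\bar{\mathcal{G}}_{n-1}+\bar{\mathcal{G}}_{n-2}$ for $n\geqslant 2$. Setting $r=1$ reproduces exactly the Fibonacci recurrence $\mathcal{F}_n=\mathcal{F}_{n-1}+\mathcal{F}_{n-2}$ with the same initial data, whence $\bar{\mathcal{G}}_n=\mathcal{F}_n$ for all $n\geqslant 0$. Setting $r=2$ reproduces the Pell recurrence $\mathcal{P}_n=2\mathcal{P}_{n-1}+\mathcal{P}_{n-2}$ with matching initial data, whence $\bar{\mathcal{G}}_n=\mathcal{P}_n$. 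One should also check that the extra convention $\bar{\mathcal{G}}_{-1}=1$ imposed in the statement of the lemma is consistent with the standard back-extensions $\mathcal{F}_{-1}=1$ and $\mathcal{P}_{-1}=1$ (both forced by running the respective recurrences backward one step), so that the index $m-1$ appearing in the identities is harmless even when $m=0$.

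Having fixed these identifications, I would simply substitute. Taking $r=1$ in \eqref{lem6-iden8} gives $\mathcal{F}_{n+m}=\mathcal{F}_n\mathcal{F}_{m-1}+\mathcal{F}_{n+1}\mathcal{F}_m$, which is part $(i)$, and taking $r=1$ in \eqref{lem6-iden9} gives $\mathcal{F}_{m+1}\mathcal{F}_{m-1}-\mathcal{F}_m^2=(-1)^m$, which is Cassini's identity $(ii)$. Likewise, taking $r=2$ in \eqref{lem6-iden8} and \eqref{lem6-iden9} yields $(iii)$ and Simpson's identity $(iv)$ for the Pell numbers. Since the hypotheses $m,n\geqslant 0$ of the corollary are exactly those of Lemma \ref{lem6}, no separate base-case analysis is needed.

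I do not anticipate a genuine obstacle here: the content has been front-loaded into the matrix power computation $A^m$ of Lemma \ref{lem6}, and the corollary is essentially a change of notation. The one point deserving a written remark is the boundary behavior at $m=0$ (and the role of the convention $\bar{\mathcal{G}}_{-1}=1$), since that is the only place where the specialization could superficially appear to reference an undefined term; confirming consistency there is the ``hard part,'' and it is dispatched in a single line.
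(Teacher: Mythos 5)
Your proposal is correct and matches the paper's approach exactly: the paper presents this corollary as an immediate consequence of Lemma \ref{lem6}, obtained by specializing $\bar{\mathcal{G}}=\mathcal{G}^{(0,1,r,1)}$ to $r=1$ for the Fibonacci identities and $r=2$ for the Pell identities. Your additional check that the convention $\bar{\mathcal{G}}_{-1}=1$ is consistent with $\mathcal{F}_{-1}=\mathcal{P}_{-1}=1$ is a worthwhile remark that the paper leaves implicit.
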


\section{Main Results}
As we mentioned in the Introduction, we are going to obtain some
new matrices whose leading principal minors form the Fibonacci,
Lucas, Pell and Jacobsthal sequences. We start with the following
theorem.

\begin{theorem}\label{th0}
If $\mathcal{G}=\mathcal{G}^{(a,b,r,1)}$, where $a, b$ and $r$ are
integers with $r\geqslant 1$, then for any nonnegative integer $n$
there holds
$$\det T_{\tilde{\mathcal{G}}, \mathcal{G}}(n) =\left\{\begin{array}{lll}
 a & & n=0,\\[0.2cm]
(2b-ar)^{n-1}(a\mathcal{G}_{n-1}+b\mathcal{G}_{n}) & & n\geqslant
1.\end{array} \right.
$$
\end{theorem}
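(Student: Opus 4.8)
The plan is to reduce the Toeplitz matrix $T_{\tilde{\mathcal{G}}, \mathcal{G}}(n)$ to an extremely sparse matrix by one sweep of determinant-preserving column operations, and then read off the determinant from the few surviving permutations. First I would record the shape of the matrix: writing $M=T_{\tilde{\mathcal{G}}, \mathcal{G}}(n)$ and letting $C_j$ denote its $j$th column, the $(i,j)$-entry is $\mathcal{G}_{j-i}$ on and above the diagonal (coming from the first row $\beta=\mathcal{G}$) and $\tilde{\mathcal{G}}_{i-j}=(-1)^{i-j}\mathcal{G}_{i-j}$ strictly below it (coming from the first column $\alpha=\tilde{\mathcal{G}}$). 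The structural fact I want to exploit is that the super-diagonal entries obey $\mathcal{G}_k=r\mathcal{G}_{k-1}+\mathcal{G}_{k-2}$, while the sub-diagonal entries obey $\tilde{\mathcal{G}}_k=-r\tilde{\mathcal{G}}_{k-1}+\tilde{\mathcal{G}}_{k-2}$. The case $n=0$ is immediate, and $n=1$ is a direct $2\times 2$ computation giving $a^2+b^2=a\mathcal{G}_0+b\mathcal{G}_1$, which matches the formula since $(2b-ar)^0=1$.

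For $n\geqslant 2$ I would apply, for each $j=2,3,\ldots,n$, the operation $C_j\mapsto C_j-r\,C_{j-1}-C_{j-2}$ using the original columns; since this realizes a unipotent upper-triangular change of columns it leaves the determinant unchanged. Then I would verify entry-by-entry that in each new column $j$ the two recurrences force every entry to vanish except in one row. Away from the diagonal the cancellation is precisely the recurrence for $\mathcal{G}$ (above) or for $\tilde{\mathcal{G}}$ (below); the one nonvanishing row is $i=j-1$, where the combination straddles the diagonal and evaluates to $b-ra-(-b)=2b-ar$. The conclusion of this step is that, for every $j\geqslant 2$, the reduced column $j$ has a single nonzero entry $2b-ar$ sitting in row $j-1$, while columns $0$ and $1$ still carry the (alternating) sequence entries.

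Finally I would evaluate the determinant of the reduced matrix. Because columns $2,\ldots,n$ each contain exactly one nonzero entry, in the Leibniz expansion every surviving permutation must send column $j$ to row $j-1$ for all $j\geqslant 2$; this forces columns $0$ and $1$ to fill rows $0$ and $n$ in one of exactly two ways. Tracking the two resulting cycles together with the factor $(2b-ar)^{n-1}$ contributed by the sparse columns should yield $(2b-ar)^{n-1}\big(a\mathcal{G}_{n-1}+b\mathcal{G}_n\big)$. The main obstacle I anticipate is bookkeeping rather than conceptual: getting the boundary rows near the diagonal right during the column reduction (it is exactly there that the clean cancellation fails and leaves $2b-ar$), and then pinning down the two permutation signs so that the stray $(-1)$ factors cancel and the two contributions add rather than subtract. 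As a safeguard I would check that the proposed closed form satisfies the second-order recurrence $d_n=r(2b-ar)\,d_{n-1}+(2b-ar)^2 d_{n-2}$, which both guards against a sign error and hints at an alternative route via establishing this recurrence for the Toeplitz minors directly.
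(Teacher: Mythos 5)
Your argument is correct, and it reaches the same intermediate object as the paper by a different and somewhat lighter route. The paper proves the factorization $T_{\tilde{\mathcal{G}},\mathcal{G}}(n)=L(n)H(n)$ with an explicitly prescribed unipotent lower triangular $L(n)$ whose entries are $\pm\bar{\mathcal{G}}_{i+j-1}$ for the auxiliary sequence $\bar{\mathcal{G}}=\mathcal{G}^{(0,1,r,1)}$, and an upper Hessenberg $H(n)$ whose rows $2,\ldots,n$ carry the single entry $ra-2b$ on the subdiagonal; verifying $T=LH$ entrywise requires the identity $\mathcal{G}_n=a\bar{\mathcal{G}}_{n-1}+b\bar{\mathcal{G}}_n$ (Lemma \ref{lem5}) and a two-case computation. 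You instead multiply on the right by the unit upper triangular matrix encoding $C_j\mapsto C_j-rC_{j-1}-C_{j-2}$, and the vanishing of all but one entry in each new column follows directly from the recurrences $\mathcal{G}_k=r\mathcal{G}_{k-1}+\mathcal{G}_{k-2}$ above the diagonal and $\tilde{\mathcal{G}}_k=-r\tilde{\mathcal{G}}_{k-1}+\tilde{\mathcal{G}}_{k-2}$ on and below it (note that the row $i=j$ also needs checking, but since the diagonal entry is $\mathcal{G}_0=\tilde{\mathcal{G}}_0$ it falls under the $\tilde{\mathcal{G}}$-recurrence and vanishes: $a+rb-(a+rb)=0$). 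Your reduced matrix is in effect the column-dual of the paper's $H(n)$: full columns $0,1$ and the single entry $2b-ra$ at position $(j-1,j)$ for $j\geqslant 2$. The final Leibniz evaluation does close up as you predict: the two surviving permutations are an $n$-cycle of sign $(-1)^{n-1}$ hitting $a\cdot\tilde{\mathcal{G}}_{n-1}=a(-1)^{n-1}\mathcal{G}_{n-1}$ and an $(n+1)$-cycle of sign $(-1)^{n}$ hitting $b\cdot\tilde{\mathcal{G}}_{n}=b(-1)^{n}\mathcal{G}_{n}$, so the signs cancel and the two terms add to give $(2b-ar)^{n-1}(a\mathcal{G}_{n-1}+b\mathcal{G}_n)$. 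What your version buys is self-containedness: no auxiliary sequence $\bar{\mathcal{G}}$, no Lemma \ref{lem5}, and a one-line justification of each cancellation; what the paper's version buys is an explicit $LH$ factorization of the Toeplitz matrix itself, which is of some independent interest. Your closing consistency check via the recurrence $d_n=r(2b-ar)d_{n-1}+(2b-ar)^2d_{n-2}$ is sound and indeed follows from linearity of $e_n=a\mathcal{G}_{n-1}+b\mathcal{G}_n$ in the $\mathcal{G}$-recurrence.
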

\begin{proof} For $n=0$, we have nothing to do. Therefore, we may assume that $n\geqslant 1$.
We use the  matrix factorization method. Let $T(n)$ denote the
matrix $T_{\tilde{\mathcal{G}}, \mathcal{G}}(n)$. We claim that
\begin{equation}\label{e-new-1} T(n)=L(n)\cdot H(n),\end{equation}
where $L(n)=(L_{i, j})_{0\leqslant i, j\leqslant n}$ with
\begin{equation}\label{e-new-2}
L_{i,j}=\left\{\begin{array}{lll} (-1)^{i+j}\bar{\mathcal{G}}_{i+j-1} &  & j=0,1, \   i\geqslant 0, \\[0.2cm]
0 &  & i=0, \  j\geqslant 2, \\[0.2cm]
L_{i-1, j-1} & & i\geqslant 1, \ j\geqslant 2,
\end{array} \right.\end{equation}
(we recall that $\bar{\mathcal{G}}_{-1}=1$) and
$H(n)=(H_{i,j})_{0\leqslant i, j\leqslant n}$ with
\begin{equation}\label{e-new-3} H_{i,j}=\left\{\begin{array}{lll}
(-1)^{i}\mathcal{G}_i &  & i=0, 1, \ j=0, \\[0.2cm]
\mathcal{G}_{j-i} &  & i=0, 1,  \ j\geqslant 1, \\[0.2cm]
r\mathcal{G}_0-2\mathcal{G}_1 &  & i-j=1, \ i\geqslant 2,  \\[0.2cm]
0 & & \mbox{otherwise}.
\end{array} \right. \end{equation}
The matrix $L(n)$ is a lower triangular matrix with 1's on the
diagonal, whereas $H(n)$ is an upper Hessenberg matrix. The
matrices $L(4)$ and $H(4)$ for instance are given by
$$\begin{array}{lll} L(4)&=&\left (\begin{array}{ccccc}
\bar{\mathcal{G}}_{-1}    & -\bar{\mathcal{G}}_0  & 0 & 0 & 0  \\[0.2cm]
 -\bar{\mathcal{G}}_0  & \bar{\mathcal{G}}_1    & -\bar{\mathcal{G}}_0 & 0 & 0 \\[0.2cm]
 \bar{\mathcal{G}}_{1} &  -\bar{\mathcal{G}}_2  & \bar{\mathcal{G}}_1  & -\bar{\mathcal{G}}_0 & 0 \\[0.2cm]
 -\bar{\mathcal{G}}_2 &  \bar{\mathcal{G}}_3   &  -\bar{\mathcal{G}}_2  & \bar{\mathcal{G}}_1  &
  -\bar{\mathcal{G}}_0 \\[0.2cm]
\bar{\mathcal{G}}_3 & -\bar{\mathcal{G}}_4 & \bar{\mathcal{G}}_3   &  -\bar{\mathcal{G}}_2  & \bar{\mathcal{G}}_1  \\[0.2cm]
\end{array} \right )\\[1.5cm] &=& \left (\begin{array}{ccccc}
1    & 0  & 0 & 0 & 0  \\[0.2cm]
 -\bar{\mathcal{G}}_0  & 1    & 0 & 0 & 0 \\[0.2cm]
 \bar{\mathcal{G}}_{1} &  -\bar{\mathcal{G}}_2  & 1  & 0 & 0 \\[0.2cm]
 -\bar{\mathcal{G}}_2 &  \bar{\mathcal{G}}_3   &  -\bar{\mathcal{G}}_2  & 1  & 0 \\[0.2cm]
\bar{\mathcal{G}}_3 & -\bar{\mathcal{G}}_4 & \bar{\mathcal{G}}_3   &  -\bar{\mathcal{G}}_2  & 1  \\[0.2cm]
\end{array} \right ),\end{array}$$

$$\begin{array}{lll} H(4) &=& \left (\begin{array}{ccccc}
\mathcal{G}_0    & \mathcal{G}_1  & \mathcal{G}_2 & \mathcal{G}_3 & \mathcal{G}_4  \\[0.2cm]
 -\mathcal{G}_1  & \mathcal{G}_0    & \mathcal{G}_1  & \mathcal{G}_2 & \mathcal{G}_3 \\[0.2cm]
 0  &  r\mathcal{G}_0-2\mathcal{G}_1  & 0    & 0 & 0 \\[0.2cm]
0& 0  &  r\mathcal{G}_0-2\mathcal{G}_1  &  0 & 0 \\[0.2cm]
0& 0& 0  &  r\mathcal{G}_0-2\mathcal{G}_1  & 0 \\[0.2cm]
\end{array} \right )\\[1.5cm] &  = & \left (\begin{array}{ccccc}
a    & b  & \mathcal{G}_2 & \mathcal{G}_3 & \mathcal{G}_4  \\[0.2cm]
 -b  & a    & \mathcal{G}_1  & \mathcal{G}_2 & \mathcal{G}_3 \\[0.2cm]
 0  &  ra-2b  & 0    & 0 & 0 \\[0.2cm]
0& 0  &  ra-2b  &  0 & 0 \\[0.2cm]
0& 0& 0  &  ra-2b & 0 \\[0.2cm]
\end{array} \right ).\end{array}$$

In what follows, for convenience, we will let $T=T(n)$, $L=L(n)$
and $H=H(n)$. For the proof of the claimed factorization Eq.
(\ref{e-new-1}) we compute the $(i,j)$-entry of $L\cdot H$, that
is $$(L\cdot H)_{i,j}=\sum\limits_{k=0}^{n}L_{i,k}H_{k,j}.$$
As a matter of fact, we should establish\\
$$\begin{array}{rcl}{\rm R}_0(L\cdot H)&={\rm R}_0(T)=&(\mathcal{G}_0,
\mathcal{G}_1, \ldots, \mathcal{G}_n),\\[0.2cm]
 {\rm C}_0(L\cdot H)&={\rm
C}_0(T)=&(\mathcal{G}_0, -\mathcal{G}_1, \ldots,
(-1)^n\mathcal{G}_n),\end{array}$$ and
\begin{equation}\label{2009-e4}
(L\cdot H)_{i,j}=(L\cdot H)_{i-1,j-1} \ \  \mbox{for} \ \
1\leqslant i, j\leqslant n.
\end{equation}

Let us do the required calculations. First, suppose that $i=0$.
Then
$$(L\cdot H)_{0,j}=\sum_{k=0}^{n}
L_{0,k}H_{k,j}=L_{0,0}H_{0,j}=H_{0,j}=\mathcal{G}_j,$$ which
shows that ${\rm R}_0(L\cdot H)={\rm R}_0(T)$.

Next, assume that $j=0$. In this case, using Lemma \ref{lem5}, we
obtain
$$\begin{array}{lll} (L\cdot H)_{i,0}& = & \sum_{k=0}^{n}
L_{i,k}H_{k,0}=L_{i,0}H_{0,0}+L_{i,1}H_{1,0}\\[0.2cm] & = &
(-1)^i[a\bar{\mathcal{G}}_{i-1}+b\bar{\mathcal{G}}_{i}]=(-1)^i\mathcal{G}_i,\end{array}
$$ and hence we have ${\rm
C}_0(L\cdot H)={\rm C}_0(T)$.

Finally, we must establish Eq. (\ref{2009-e4}). To do this, we
assume that $1\leqslant i, j\leqslant n$. We distinguish two cases
separately: $j=1$ and $j\geqslant 2$.

{\sc Case 1.} $j=1$. We calculate the sum in question:
$$\begin{array}{lll}
(L\cdot H)_{i,1} & = & \sum\limits_{k=0}^{n}
L_{i,k}H_{k,1}=L_{i,0}H_{0,1}+L_{i,1}H_{1,1}+L_{i,2}H_{2,1}\\[0.4cm]
& = &  L_{i,0}H_{0,1}+L_{i,1}H_{1,1}+L_{i-1,1}H_{2,1}\\[0.4cm]
& = &  (-1)^i\bar{\mathcal{G}}_{i-1}b+(-1)^{i+1}\bar{\mathcal{G}}_{i}a+(-1)^i\bar{\mathcal{G}}_{i-1}(ra-2b)\\[0.4cm]
& = & (-1)^i[r\bar{\mathcal{G}}_{i-1}-\bar{\mathcal{G}}_{i}]a+(-1)^{i+1}\bar{\mathcal{G}}_{i-1}b \\[0.4cm]
& = & (-1)^{i+1}\bar{\mathcal{G}}_{i-2}a+(-1)^{i+1}\bar{\mathcal{G}}_{i-1}b \\[0.4cm]
& = & (-1)^{i+1}[a\bar{\mathcal{G}}_{i-2}+b\bar{\mathcal{G}}_{i-1}] \\[0.4cm]
& = & (-1)^{i+1}\mathcal{G}_{i-1} \ \ \ \ (\mbox{by Lemma
\ref{lem5}})\\[0.4cm]
& = & (L\cdot H)_{i-1,0},\\[0.4cm]
\end{array}$$
as required.

{\sc Case 2.} $j\geqslant 2$. We proceed analogously. In this
case, we have
$$\begin{array}{lll}
&& (L\cdot H)_{i,j}\\[0.2cm]
  & = & \sum\limits_{k=0}^{n}
L_{i,k}H_{k,j}=L_{i,0}H_{0,j}+L_{i,1}H_{1,j}+L_{i,2}H_{2,j}+\sum\limits_{k=3}^{n}
L_{i,k}H_{k,j}\\[0.4cm]
& = &
(-1)^i\bar{\mathcal{G}}_{i-1}\mathcal{G}_{j}+(-1)^{i+1}\bar{\mathcal{G}}_{i}\mathcal{G}_{j-1}+\sum\limits_{k=3}^{n}
L_{i-1,k-1}H_{k-1,j-1}\\[0.4cm]
&  &
(\mbox{Note that} \  H_{2,j}=0 \ \mbox{for} \  j\geqslant 2) \\[0.4cm]
& = &
(-1)^i\bar{\mathcal{G}}_{i-1}\mathcal{G}_{j}+(-1)^{i+1}[\bar{\mathcal{G}}_{i-2}+r\bar{\mathcal{G}}_{i-1}]\mathcal{G}_{j-1}
+\sum\limits_{k=2}^{n}
L_{i-1,k}H_{k,j-1}\\[0.4cm]
&  &
(\mbox{Note that} \  L_{i-1,n}=0) \\[0.4cm]
& = &
(-1)^i\bar{\mathcal{G}}_{i-1}[\mathcal{G}_{j}-r\mathcal{G}_{j-1}]+(-1)^{i+1}\bar{\mathcal{G}}_{i-2}\mathcal{G}_{j-1}
+\sum\limits_{k=2}^{n}
L_{i-1,k}H_{k,j-1} \\[0.4cm]
& = &
(-1)^i\bar{\mathcal{G}}_{i-1}\mathcal{G}_{j-2}+(-1)^{i-1}\bar{\mathcal{G}}_{i-2}\mathcal{G}_{j-1}
+\sum\limits_{k=2}^{n}
L_{i-1,k}H_{k,j-1} \\[0.4cm]
& = & \sum\limits_{k=0}^{n} L_{i-1,k}H_{k,j-1}=(L\cdot H)_{i-1,j-1},\\[0.4cm]
\end{array}$$
as required.

Returning back to the Eq. (\ref{e-new-1}), we conclude that
$$\det T= \det(L\cdot H)=\det L\cdot \det H=\det H.$$
Now, expanding the determinant of $H$ along the rows $2, 3,
\ldots, n$, one can easily get
$$\det H=(2b-ra)^{n-1}(a\mathcal{G}_{n-1}+b\mathcal{G}_{n}).$$
This completes the proof of the theorem. \end{proof}

Although we have restricted ourselves to integral values in
Theorem \ref{th0}, it holds for arbitrary values of all
parameters $a$, $b$ and $r$. In the sequel, we will consider some
applications of Theorem \ref{th0} in some simple, but
interesting, cases.

\begin{corollary}\label{corollary-1}
Let $m\geqslant 0$ be an integer and
$\alpha=(\mathcal{F}_{m+i})_{i\geqslant 0}$. Then there holds
\begin{equation}\label{equqtion-17-new} \det T_{\tilde{\alpha},
\alpha}(n)=\mathcal{L}_m^{n-1}\mathcal{F}_{2m+n}=
(\mathcal{F}_{m-1}+\mathcal{F}_{m+1})^{n-1}\mathcal{F}_{2m+n}, \
\ n\geqslant 0.\end{equation} In particular, we have
\begin{itemize}
\item[{$(1)$}] If $m=0$, then  $\det T_{\tilde{\mathcal{F}},
\mathcal{F}}(n)=2^{n-1}\mathcal{F}_n$.
\item[{$(2)$}]  If $m=1$, then $\det T_{\tilde{\alpha},
\alpha}(n)=\mathcal{F}_{n+2}$.
\end{itemize}
\end{corollary}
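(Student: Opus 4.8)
The plan is to recognize the sequence $\alpha=(\mathcal{F}_{m+i})_{i\geqslant 0}$ as a member of the Gibonacci family to which Theorem \ref{th0} applies, and then to simplify the resulting closed form with the Fibonacci--Lucas identities gathered earlier. Since $\mathcal{F}_{m+i}=\mathcal{F}_{m+i-1}+\mathcal{F}_{m+i-2}$, the shifted sequence $\alpha$ obeys the Fibonacci recurrence with $r=1$ and has initial terms $\alpha_0=\mathcal{F}_m$ and $\alpha_1=\mathcal{F}_{m+1}$; hence $\alpha=\mathcal{G}^{(\mathcal{F}_m,\mathcal{F}_{m+1},1,1)}$. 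This licenses me to invoke Theorem \ref{th0} with the parameter choice $a=\mathcal{F}_m$, $b=\mathcal{F}_{m+1}$, $r=1$ (all integers with $r\geqslant 1$, so we stay within the stated hypotheses).

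First I would substitute these parameters into the $n\geqslant 1$ branch of Theorem \ref{th0}. Using $\mathcal{G}_{n-1}=\alpha_{n-1}=\mathcal{F}_{m+n-1}$ and $\mathcal{G}_n=\alpha_n=\mathcal{F}_{m+n}$, this gives
$$\det T_{\tilde{\alpha},\alpha}(n)=(2\mathcal{F}_{m+1}-\mathcal{F}_m)^{n-1}\bigl(\mathcal{F}_m\mathcal{F}_{m+n-1}+\mathcal{F}_{m+1}\mathcal{F}_{m+n}\bigr).$$
I would then treat the two factors separately. For the base, rewrite $2\mathcal{F}_{m+1}-\mathcal{F}_m=\mathcal{F}_{m+1}+(\mathcal{F}_{m+1}-\mathcal{F}_m)=\mathcal{F}_{m+1}+\mathcal{F}_{m-1}$, which equals $\mathcal{L}_m$ by the standard identity $\mathcal{L}_m=\mathcal{F}_{m-1}+\mathcal{F}_{m+1}$; this simultaneously produces both expressions displayed in Eq. (\ref{equqtion-17-new}). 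For the remaining factor, I would apply Corollary \ref{coro-lemma-6}$(i)$ with its two indices set to $m$ and $m+n$, which collapses $\mathcal{F}_m\mathcal{F}_{m+n-1}+\mathcal{F}_{m+1}\mathcal{F}_{m+n}$ to $\mathcal{F}_{2m+n}$. Combining the two yields $\det T_{\tilde{\alpha},\alpha}(n)=\mathcal{L}_m^{\,n-1}\mathcal{F}_{2m+n}$ for all $n\geqslant 1$.

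The only genuinely delicate point, and the step I expect to need a separate argument, is the case $n=0$, where the asserted formula reads $\mathcal{L}_m^{-1}\mathcal{F}_{2m}$ and so carries a negative exponent; here I cannot read the value off the $n\geqslant 1$ branch. Instead, Theorem \ref{th0} gives $\det T_{\tilde{\alpha},\alpha}(0)=\alpha_0=\mathcal{F}_m$ directly, and to reconcile this with the stated formula I would invoke the doubling identity $\mathcal{F}_{2m}=\mathcal{F}_m\mathcal{L}_m$, which is Corollary \ref{coro-lemma-6}$(i)$ specialized at index $m$ for both variables together with $\mathcal{L}_m=\mathcal{F}_{m-1}+\mathcal{F}_{m+1}$. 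Since $\mathcal{L}_m\neq 0$ for every integer $m\geqslant 0$, this gives $\mathcal{L}_m^{-1}\mathcal{F}_{2m}=\mathcal{F}_m$, confirming the $n=0$ case. Finally, the two itemized specializations are immediate substitutions: $m=0$ gives $\mathcal{L}_0=2$ and $\mathcal{F}_{n}$, hence $\det T_{\tilde{\mathcal{F}},\mathcal{F}}(n)=2^{n-1}\mathcal{F}_n$, while $m=1$ gives $\mathcal{L}_1=1$ and $\mathcal{F}_{n+2}$, hence $\det T_{\tilde{\alpha},\alpha}(n)=\mathcal{F}_{n+2}$.
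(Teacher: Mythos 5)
Your proposal is correct and follows essentially the same route as the paper: both apply Theorem \ref{th0} with $a=\mathcal{F}_m$, $b=\mathcal{F}_{m+1}$, $r=1$, simplify the base via $2\mathcal{F}_{m+1}-\mathcal{F}_m=\mathcal{F}_{m-1}+\mathcal{F}_{m+1}=\mathcal{L}_m$, collapse the second factor with Corollary \ref{coro-lemma-6}$(i)$, and absorb the $n=0$ case into the general formula via $\mathcal{F}_{2m}=\mathcal{F}_m\mathcal{L}_m$. Your treatment of the $n=0$ case is in fact slightly more explicit than the paper's, which silently rewrites $\mathcal{F}_m$ as $(\mathcal{F}_{m-1}+\mathcal{F}_{m+1})^{-1}\mathcal{F}_{2m}$.
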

\begin{proof} Let $d_n=\det T_{\tilde{\alpha},
\alpha}(n)$. Using Theorem \ref{th0} and Corollary
\ref{coro-lemma-6} $(i)$, we have
$$\begin{array}{lll} d_n& =& \left\{\begin{array}{ll}
 \mathcal{F}_m & n=0,\\[0.3cm]
(2\mathcal{F}_{m+1}-\mathcal{F}_m)^{n-1}(\mathcal{F}_m\mathcal{F}_{m+n-1}+\mathcal{F}_{m+1}\mathcal{F}_{m+n})
&  n\geqslant 1,\end{array} \right. \\[0.7cm]
& = & \left\{\begin{array}{lll}
 \mathcal{F}_m & & n=0,\\[0.3cm]
(\mathcal{F}_{m-1}+\mathcal{F}_{m+1})^{n-1}\mathcal{F}_{2m+n} & & n\geqslant 1,\end{array} \right. \\[0.7cm]
& = & \left\{\begin{array}{lll}
 (\mathcal{F}_{m-1}+\mathcal{F}_{m+1})^{-1}\mathcal{F}_{2m} & & n=0,\\[0.3cm]
(\mathcal{F}_{m-1}+\mathcal{F}_{m+1})^{n-1}\mathcal{F}_{2m+n} & & n\geqslant 1,\end{array} \right. \\[0.7cm]
& = & (\mathcal{F}_{m-1}+\mathcal{F}_{m+1})^{n-1}\mathcal{F}_{2m+n}, \ \ \   n\geqslant 0, \\[0.2cm]
& = & \mathcal{L}_{m}^{n-1}\mathcal{F}_{2m+n}, \ \ \   n\geqslant 0. \\[0.2cm]
\end{array}
$$
The rest follows immediately.
\end{proof}

\begin{corollary}\label{corollary-2}
Let $m\geqslant 0$ be an integer and
$\alpha=(\mathcal{P}_{m+i})_{i\geqslant 0}$. Then for every
$n\geqslant 0$ there holds
$$ \det T_{\tilde{\alpha}, \alpha}(n)=(\mathcal{P}_{m-1}+\mathcal{P}_{m+1})^{n-1}\mathcal{P}_{2m+n}. \
$$ In particular, for $m=0$ we conclude that  $\det T_{\tilde{\mathcal{P}},
\mathcal{P}}(n)=2^{n-1}\mathcal{P}_n$.
\end{corollary}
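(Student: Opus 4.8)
The plan is to imitate the proof of Corollary~\ref{corollary-1} verbatim, replacing the Fibonacci data by the corresponding Pell data. The first observation is that the shifted sequence $\alpha=(\mathcal{P}_{m+i})_{i\geqslant 0}$ is itself a Gibonacci sequence of the required shape: since the Pell recurrence $\mathcal{P}_{k}=2\mathcal{P}_{k-1}+\mathcal{P}_{k-2}$ is translation invariant, we have $\alpha_i=\mathcal{P}_{m+i}=2\alpha_{i-1}+\alpha_{i-2}$, so that $\alpha=\mathcal{G}^{(a,b,r,1)}$ with $a=\mathcal{P}_m$, $b=\mathcal{P}_{m+1}$ and $r=2$. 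This brings the matrix $T_{\tilde{\alpha},\alpha}(n)$ exactly into the scope of Theorem~\ref{th0}.

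Applying Theorem~\ref{th0} with these values, for $n\geqslant 1$ I obtain
\begin{equation*}
\det T_{\tilde{\alpha},\alpha}(n)=(2\mathcal{P}_{m+1}-2\mathcal{P}_m)^{n-1}\big(\mathcal{P}_m\mathcal{P}_{m+n-1}+\mathcal{P}_{m+1}\mathcal{P}_{m+n}\big).
\end{equation*}
Two simplifications then finish the computation. For the base of the power, the Pell recurrence $\mathcal{P}_{m+1}=2\mathcal{P}_m+\mathcal{P}_{m-1}$ gives $\mathcal{P}_{m-1}=\mathcal{P}_{m+1}-2\mathcal{P}_m$, whence $2\mathcal{P}_{m+1}-2\mathcal{P}_m=\mathcal{P}_{m-1}+\mathcal{P}_{m+1}$. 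For the remaining factor, I would invoke the Pell addition formula, Corollary~\ref{coro-lemma-6}~$(iii)$, with its two indices chosen to be $m+n$ and $m$; this yields $\mathcal{P}_m\mathcal{P}_{m+n-1}+\mathcal{P}_{m+1}\mathcal{P}_{m+n}=\mathcal{P}_{2m+n}$. Combining the two gives the claimed formula for $n\geqslant 1$.

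It remains to fold in the exceptional value $n=0$, where Theorem~\ref{th0} returns $\det T_{\tilde{\alpha},\alpha}(0)=a=\mathcal{P}_m$. To see that this agrees with the single closed form $(\mathcal{P}_{m-1}+\mathcal{P}_{m+1})^{n-1}\mathcal{P}_{2m+n}$ at $n=0$, I would apply Corollary~\ref{coro-lemma-6}~$(iii)$ once more, this time with both indices equal to $m$, obtaining $\mathcal{P}_m(\mathcal{P}_{m-1}+\mathcal{P}_{m+1})=\mathcal{P}_{2m}$, i.e. $(\mathcal{P}_{m-1}+\mathcal{P}_{m+1})^{-1}\mathcal{P}_{2m}=\mathcal{P}_m$. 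Hence the formula holds uniformly for all $n\geqslant 0$. Specializing to $m=0$ and using $\mathcal{P}_{-1}=1$, $\mathcal{P}_1=1$ gives $\mathcal{P}_{-1}+\mathcal{P}_1=2$ and therefore $\det T_{\tilde{\mathcal{P}},\mathcal{P}}(n)=2^{n-1}\mathcal{P}_n$.

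Since every ingredient is already in place, there is no genuine obstacle; the only point demanding care is the index bookkeeping in the two applications of Corollary~\ref{coro-lemma-6}~$(iii)$. Writing that identity as $\mathcal{P}_q\mathcal{P}_{p-1}+\mathcal{P}_{q+1}\mathcal{P}_p=\mathcal{P}_{p+q}$, one must match the pattern correctly, taking $(p,q)=(m+n,m)$ in the generic case and $(p,q)=(m,m)$ in the degenerate case $n=0$.
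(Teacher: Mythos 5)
Your proposal is correct and follows essentially the same route as the paper's own proof: both apply Theorem~\ref{th0} with $a=\mathcal{P}_m$, $b=\mathcal{P}_{m+1}$, $r=2$, simplify $2\mathcal{P}_{m+1}-2\mathcal{P}_m$ to $\mathcal{P}_{m-1}+\mathcal{P}_{m+1}$ via the Pell recurrence, collapse the remaining factor with Corollary~\ref{coro-lemma-6}~$(iii)$, and absorb the $n=0$ case by the identity $\mathcal{P}_m(\mathcal{P}_{m-1}+\mathcal{P}_{m+1})=\mathcal{P}_{2m}$. The index bookkeeping you flag is handled correctly.
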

\begin{proof} Let $d_n=\det T_{\tilde{\alpha}, \alpha}(n)$.  Again, using Theorem \ref{th0} and Corollary \ref{coro-lemma-6} $(iii)$, we have
$$\begin{array}{lll} d_n& =& \left\{\begin{array}{lll}
 \mathcal{P}_m & & n=0,\\[0.3cm]
(2\mathcal{P}_{m+1}-2\mathcal{P}_m)^{n-1}(\mathcal{P}_m\mathcal{P}_{m+n-1}+\mathcal{P}_{m+1}\mathcal{P}_{m+n})
& & n\geqslant 1,\end{array} \right. \\[0.7cm]
& = & \left\{\begin{array}{lll}
 \mathcal{P}_m & & n=0,\\[0.3cm]
(\mathcal{P}_{m-1}+\mathcal{P}_{m+1})^{n-1}\mathcal{P}_{2m+n} & & n\geqslant 1,\end{array} \right. \\[0.7cm]
& = & \left\{\begin{array}{lll}
 (\mathcal{P}_{m-1}+\mathcal{P}_{m+1})^{-1}\mathcal{P}_{2m} & & n=0,\\[0.3cm]
(\mathcal{P}_{m-1}+\mathcal{P}_{m+1})^{n-1}\mathcal{P}_{2m+n} & & n\geqslant 1,\end{array} \right. \\[0.7cm]
& = & (\mathcal{P}_{m-1}+\mathcal{P}_{m+1})^{n-1}\mathcal{P}_{2m+n}, \ \ \   n\geqslant 0. \\[0.2cm]
\end{array}
$$
The rest follows immediately.
\end{proof}

Notice that, using Lemma \ref{lem4}, if
$\alpha=(\mathcal{F}_{i+1})_{i\geqslant 0}=(1, 1, 2, 3, 5, 8,
\ldots)$, then
$$\left \{
\begin{array}{lll} \check{\alpha}=(\mathcal{F}_{2i+1})_{i\geqslant 0}=(1, 2,
5, 13, 34, \ldots) &  & \mbox{(A001519 in \cite{integer})} \\[0.3cm]
\check{\tilde{\alpha}}=(\mathcal{F}_{i-1})_{i\geqslant
0}=(1, 0, 1, 1, 2, 3, 5, \ldots) &  &  \mbox{(A212804 in \cite{integer})}   \\
\end{array} \right.
$$

\begin{corollary}\label{coro-seven}
If $\alpha=(\mathcal{F}_{i+1})_{i\geqslant 0}$, then the
following infinite integer matrices:\\

\noindent $\begin{array}{ll} T_{\tilde{\alpha}, \alpha}=\left (
\begin{array}{ccccc}
1& 1& 2& 3 &   \cdot \\
-1& 1& 1& 2 &   \cdot \\
2& -1& 1& 1 &   \cdot \\
-3& 2& -1& 1 &   \cdot \\
\cdot& \cdot& \cdot& \cdot& \cdot \\
\end{array} \right ), &   T_{\alpha, \tilde{\alpha}}=\left (
\begin{array}{ccccc}
1& -1& 2& -3 &   \cdot \\
1& 1& -1& 2 &   \cdot \\
2& 1& 1& -1 &   \cdot \\
3& 2& 1& 1 &   \cdot \\
\cdot& \cdot& \cdot& \cdot& \cdot \\
\end{array} \right ),\\[1.5cm]
P_{\check{\tilde{\alpha}}, \check{\alpha}}=\left (
\begin{array}{ccccc}
1& 2& 5& 13 &  \cdot \\
0& 2& 7& 20 &   \cdot \\
1& 3& 10& 30 &  \cdot \\
1& 4& 14& 44 &  \cdot \\
\cdot& \cdot&  \cdot& \cdot& \cdot \\
\end{array} \right ), &
P_{\check{\alpha}, \check{\tilde{\alpha}}}=\left (
\begin{array}{ccccc}
1& 0& 1& 1 &   \cdot \\
2& 2& 3& 4 &   \cdot \\
5& 7& 10& 14 &  \cdot \\
13& 20& 30 & 44 &  \cdot \\
\cdot& \cdot&  \cdot& \cdot& \cdot \\
\end{array} \right ),\\[1.5cm]
A_{\check{\tilde{\alpha}}, \alpha}=\left (
\begin{array}{ccccc}
1& 1& 2& 3 &   \cdot \\
0& 2& 3& 5 &   \cdot \\
1& 2& 5& 8 &  \cdot \\
1& 3& 7 & 13 &   \cdot \\
\cdot& \cdot&  \cdot& \cdot& \cdot \\
\end{array} \right ), & A_{\check{\alpha}, \tilde{\alpha}}=\left (
\begin{array}{cccccc}
1& -1& 2& -3 &   \cdot \\
2& 0& 1& -1 &   \cdot \\
5& 2& 1& 0 &   \cdot \\
13& 7& 3 & 1 &   \cdot \\
\cdot& \cdot&  \cdot& \cdot& \cdot \\
\end{array} \right ),\\[1.5cm]
\end{array}$

are equimodular. In precisely, we have $$\begin{array}{lll} \det
T_{\tilde{\alpha}, \alpha}(n)&=&\det T_{\alpha,
\tilde{\alpha}}(n)=\det P_{\check{\tilde{\alpha}},
\check{\alpha}}(n)=\det P_{\check{\alpha},
\check{\tilde{\alpha}}}(n)\\[0.2cm] & = & \det A_{\check{\tilde{\alpha}},
\alpha}(n)=\det A_{\check{\alpha},
\tilde{\alpha}}(n)=\mathcal{F}_{n+2},\end{array}$$ for all
$n\geqslant 0$.
\end{corollary}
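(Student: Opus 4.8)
The plan is to split the claim into two independent pieces: first establish that all six matrices are equimodular, and then evaluate the common leading principal minor by computing it for whichever of the six is most convenient.

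For the equimodularity I would simply invoke Lemma \ref{cor-1} with the pair of sequences $(\tilde{\alpha}, \alpha)$ playing the role of the $(\alpha, \beta)$ appearing there. The only hypothesis to verify is that the two sequences share a common first term, and this is automatic since $\tilde{\alpha}_0 = (-1)^0\alpha_0 = \alpha_0$. Substituting role-$\alpha = \tilde{\alpha}$ and role-$\beta = \alpha$ into the list $T_{\alpha,\beta}, T_{\beta,\alpha}, P_{\check{\alpha},\check{\beta}}, P_{\check{\beta},\check{\alpha}}, A_{\check{\alpha},\beta}, A_{\check{\beta},\alpha}$ reproduces, term for term, exactly the six matrices $T_{\tilde{\alpha},\alpha}, T_{\alpha,\tilde{\alpha}}, P_{\check{\tilde{\alpha}},\check{\alpha}}, P_{\check{\alpha},\check{\tilde{\alpha}}}, A_{\check{\tilde{\alpha}},\alpha}, A_{\check{\alpha},\tilde{\alpha}}$ of the statement; hence they all have identical leading principal minors. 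The transform computations recorded just before the corollary, namely $\check{\alpha} = (\mathcal{F}_{2i+1})_{i\geq 0}$ and $\check{\tilde{\alpha}} = (\mathcal{F}_{i-1})_{i\geq 0}$ obtained from Lemma \ref{lem4}, are needed only to identify the explicit entries of the displayed matrices, not for the equimodularity argument itself.

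With equimodularity established, it suffices to evaluate a single determinant, and $T_{\tilde{\alpha},\alpha}(n)$ is the natural choice because it is already covered by Corollary \ref{corollary-1}. Indeed, $\alpha = (\mathcal{F}_{i+1})_{i\geq 0}$ is precisely the sequence $(\mathcal{F}_{m+i})_{i\geq 0}$ in the case $m=1$, so part $(2)$ of that corollary gives $\det T_{\tilde{\alpha},\alpha}(n) = \mathcal{F}_{n+2}$ for every $n\geq 0$. Combining this with the equimodularity from the previous step assigns the common value $\mathcal{F}_{n+2}$ to all six matrices, which is the assertion.

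There is essentially no computational obstacle, since both halves are direct specializations of results already proved. The one point demanding care is the bookkeeping in the first step: one must match the roles in Lemma \ref{cor-1} so that $\tilde{\alpha}$ (rather than $\alpha$) is the first sequence, and then confirm that the resulting six matrices coincide with those displayed, in particular that the binomial transforms land on the stated entries. Once that correspondence is checked, the proof follows immediately.
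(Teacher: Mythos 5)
Your proposal is correct and takes essentially the same route as the paper, whose proof likewise consists of citing Lemma \ref{cor-1} (applied to the pair $(\tilde{\alpha},\alpha)$, which share the first term $\mathcal{F}_1=1$) for the equimodularity, Corollary \ref{corollary-1}$(2)$ for the value $\mathcal{F}_{n+2}$, and Lemma \ref{lem4} only to identify the explicit entries of the displayed matrices. Nothing further is needed.
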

\begin{proof}
Follows directly from Corollary \ref{corollary-1} $(2)$ and Lemmas
\ref{cor-1} and \ref{lem4}.
\end{proof}
\begin{corollary}\label{th1}
Let $a, b$ and $c$ be arbitrary complex numbers, and for each
positive integer $n$, let $T_n=T_n(a, b, c)$. Then $\det T_1=c$,
$\det T_2=c+1$ and for $n\geqslant 3$,
\begin{equation}\label{e1-new}
\det T_n=\det T_{n-1}+\det T_{n-2},
\end{equation}
if and only if
\begin{equation}\label{e2-new}(a, b, c)\in \Big\{\big(c-\phi,
c-\Phi, c\big), \big(c-\Phi, c-\phi, c\big)\Big\}.
\end{equation}
Furthermore, if Eq. $(\ref{e2-new})$ holds, then we have $\det
T_n=c\mathcal{F}_n+\mathcal{F}_{n-1}$. In particular, there hold

$(1)$ If $c=0$, then $\det T_n=\mathcal{F}_{n-1}$,

$(2)$ If $c=1$, then $\det T_n=\mathcal{F}_{n+1}$,

$(3)$ If $c=-1$, then $\det T_n=-\mathcal{F}_{n-2}$,

$(4)$ If $c=2$, then $\det T_n=\mathcal{F}_{n+2}$,

$(5)$ If $c=-2$, then $\det T_n=-\mathcal{L}_{n-1}$,

$(6)$ If $c=3$, then $\det T_n=\mathcal{L}_{n+1}$.
\end{corollary}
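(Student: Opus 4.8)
The plan is to extract everything from Lemma~\ref{lem1}, which already furnishes both the order-two recurrence and the initial data for $\det T_n=\det T_n(a,b,c)$. Writing $u=2c-a-b$ and $v=(c-a)(c-b)$, Lemma~\ref{lem1} gives $\det T_1=c$, $\det T_2=c^2-ab$, and $\det T_n=u\det T_{n-1}-v\det T_{n-2}$ for $n\ge 3$. Since $\det T_1=c$ holds unconditionally, the hypotheses reduce to the two requirements $c^2-ab=c+1$ (that is, $ab=c^2-c-1$) together with the Fibonacci recurrence $\det T_n=\det T_{n-1}+\det T_{n-2}$ for $n\ge 3$. The whole biconditional then comes down to comparing the recurrence coefficients $(u,v)$ with $(1,-1)$.

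For the reverse implication I would simply substitute. Using $\phi+\Phi=1$ and $\phi\Phi=-1$, the triple $(a,b,c)=(c-\phi,c-\Phi,c)$ gives $c-a=\phi$, $c-b=\Phi$, hence $u=\phi+\Phi=1$ and $v=\phi\Phi=-1$, and the symmetric triple does the same with $\phi,\Phi$ interchanged. Lemma~\ref{lem1}'s recurrence then collapses to $\det T_n=\det T_{n-1}+\det T_{n-2}$, while $ab=(c-\phi)(c-\Phi)=c^2-(\phi+\Phi)c+\phi\Phi=c^2-c-1$ forces $\det T_2=c^2-ab=c+1$. This also makes the ``furthermore'' transparent: any sequence obeying the Fibonacci recurrence with $\det T_1=c$ and $\det T_2=c+1$ equals $c\mathcal{F}_n+\mathcal{F}_{n-1}$ by a one-line induction (or by linearity in the initial data); substituting $c=0,1,-1,2,-2,3$ and invoking the standard identities $\mathcal{F}_n-\mathcal{F}_{n-1}=\mathcal{F}_{n-2}$, $\mathcal{F}_n+\mathcal{F}_{n+1}=\mathcal{F}_{n+2}$ and $\mathcal{L}_k=\mathcal{F}_{k-1}+\mathcal{F}_{k+1}$ yields the six listed special cases.

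For the forward implication I would subtract the two recurrences, both valid for $n\ge 3$, to get
$$(u-1)\det T_{n-1}=(v+1)\det T_{n-2}\qquad(n\ge 3).$$
If one can show $u-1$ and $v+1$ both vanish, then $u=1$ and $v=-1$; combined with $ab=c^2-c-1$ this makes $a,b$ the roots of $t^2-(2c-1)t+(c^2-c-1)$, whose discriminant is $(2c-1)^2-4(c^2-c-1)=5$, so $\{a,b\}=\{c-\tfrac{1+\sqrt5}{2},\,c-\tfrac{1-\sqrt5}{2}\}=\{c-\phi,\,c-\Phi\}$, exactly the two admissible triples.

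The delicate step, and the one I expect to be the real obstacle, is justifying that $u-1$ and $v+1$ must both vanish. The mixed cases are disposed of quickly: $u=1,\,v\ne-1$ forces $\det T_n=0$ for all $n\ge 1$ and $u\ne1,\,v=-1$ forces it for all $n\ge 2$, either of which contradicts the Fibonacci-determined values $\det T_1=c$, $\det T_2=c+1$, $\det T_3=2c+1$. The genuinely problematic case is $u\ne1$ and $v\ne-1$: there the displayed relation forces the determinant sequence to be \emph{geometric}, $\det T_n=c\,\rho^{\,n-1}$ with $\rho=(v+1)/(u-1)$, whence the Fibonacci recurrence gives $\rho^2=\rho+1$, i.e.\ $\rho\in\{\phi,\Phi\}$, and $\det T_2=c\rho=c+1$ gives $c=\rho\in\{\phi,\Phi\}$, i.e.\ $c^2=c+1$. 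In this regime coefficient-matching genuinely breaks down: for instance when $b=0$ the matrix $T_n$ is triangular with $\det T_n=c^n$, so \emph{every} value of $a$ satisfies all three hypotheses while giving one of the listed triples only for special $a$. I would therefore isolate the case $c^2=c+1$ at the very start and treat it separately, and I would note that the clean biconditional is most fragile precisely here; outside this degenerate (geometric) situation the matching $u=1,\ v=-1$ is forced and the classification goes through exactly as above.
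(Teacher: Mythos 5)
Your proof runs along the same lines as the paper's --- both rest on Lemma~\ref{lem1} and on comparing the recurrence $\det T_n=(2c-a-b)\det T_{n-1}-(c-a)(c-b)\det T_{n-2}$ with Eq.~(\ref{e1-new}) --- but you have correctly isolated the one step the paper does not justify. The paper passes directly from ``the two recurrences produce the same sequence for $n\geqslant 3$'' to the system $2c-a-b=1$, $(c-a)(c-b)=-1$; such coefficient matching is legitimate only when $(\det T_n)_{n\geqslant 1}$ does not also satisfy a first-order linear recurrence, and your subtraction identity $(u-1)\det T_{n-1}=(v+1)\det T_{n-2}$ is exactly the right tool for deciding when that happens. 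Your case analysis is correct: the two mixed cases are incompatible with $\det T_1=c$, $\det T_2=c+1$, $\det T_3=2c+1$, and the case $u\neq 1$, $v\neq -1$ forces the determinants to be geometric with ratio $\rho\in\{\phi,\Phi\}$ and $c=\rho$. Moreover your example is a genuine counterexample to the corollary as stated: for $(a,b,c)=(1,0,\phi)$ the matrix $T_n$ is triangular, so $\det T_n=\phi^n$, and $\det T_1=c$, $\det T_2=\phi^2=c+1$ and the Fibonacci recurrence all hold, yet Eq.~(\ref{e2-new}) would require $(a,b)=(\sqrt{5},0)$ or $(0,\sqrt{5})$. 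So the ``only if'' direction actually fails on the locus $c^2=c+1$, and your proposal is not merely a more careful write-up but a correction: the necessity claim should either exclude $c\in\{\phi,\Phi\}$ or be restated to list the extra solutions occurring there. Away from that locus your argument is complete, and the sufficiency direction together with the induction giving $\det T_n=c\mathcal{F}_n+\mathcal{F}_{n-1}$ and the six specializations is routine and agrees with the paper.
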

\begin{proof} The sufficiency is clear. To prove the necessity, using Lemma
\ref{lem1} we have
$$\det T_n=(2c-a-b)\det T_{n-1}-(c-a)(c-b)\det T_{n-2},$$
and by comparing this with Eq. (\ref{e1-new}), we then obtain the
system of equalities
$$\left\{\begin{array}{l} 2c-a-b=1 \\[0.2cm]
(c-a)(c-b)=-1  \end{array} \right.$$ which has two solutions of
the forms
$$a=\frac{(2c-1)\pm \sqrt{5}}{2} \ \ \ \ \mbox{and} \ \ \ \
b=\frac{(2c-1)\mp\sqrt{5}}{2}.$$ The rest of the proof is obvious.
\end{proof}

\begin{corollary}\label{th2}
Let $a, b$ and $c$ be arbitrary complex numbers, and for each
positive integer $n$, let $T_n=T_n(a, b, c)$. Then $\det T_1=c$,
$\det T_2=2c+1$ and for $n\geqslant 3$,
\begin{equation}\label{e3-new}
\det T_n=2\det T_{n-1}+\det T_{n-2},
\end{equation}
if and only if
\begin{equation}\label{e4-new}(a, b, c)\in \Big\{\big(c-1+\sqrt{2},
c-1-\sqrt{2}, c\big), \big(c-1-\sqrt{2}, c-1+\sqrt{2},
c\big)\Big\}.
\end{equation}
Furthermore, if Eq. $(\ref{e4-new})$ holds, then we have $\det
T_n=c\mathcal{P}_n+\mathcal{P}_{n-1}$. In particular, there hold

$(1)$ If $c=0$, then $\det T_n= \mathcal{P}_{n-1}$,

$(2)$ If $c=2$, then $\det T_n= \mathcal{P}_{n+1}$.
\end{corollary}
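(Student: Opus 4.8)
The plan is to mirror the proof of Corollary \ref{th1}, replacing the Fibonacci recurrence by the Pell recurrence and using Lemma \ref{lem1} as the main engine. The sufficiency direction is immediate: assuming Eq. (\ref{e4-new}) holds, a direct substitution into the recursive formula of Lemma \ref{lem1} reproduces exactly Eq. (\ref{e3-new}) together with the stated initial values, so I would dispatch it in a single line.

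For the necessity direction, I would start from the recursion in Lemma \ref{lem1},
$$\det T_n = (2c - a - b)\det T_{n-1} - (c-a)(c-b)\det T_{n-2},$$
and compare it coefficient-by-coefficient with the target relation Eq. (\ref{e3-new}), namely $\det T_n = 2\det T_{n-1} + \det T_{n-2}$. This forces the system
$$2c - a - b = 2, \qquad (c-a)(c-b) = -1.$$
The natural change of variables is $u = c - a$ and $v = c - b$: the first equation becomes $u + v = 2$ and the second $uv = -1$, so $u$ and $v$ are the two roots of $t^2 - 2t - 1 = 0$, that is $1 \pm \sqrt{2}$. Unwinding the substitution yields precisely the two ordered triples displayed in Eq. (\ref{e4-new}). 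I would also remark that the initial value $\det T_2 = c^2 - ab = 2c+1$ imposes no extra constraint but is automatically consistent with these solutions, since here $ab = (c-1)^2 - 2$.

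Finally, for the closed form I would argue that $d_n := \det T_n$ and the candidate $c\mathcal{P}_n + \mathcal{P}_{n-1}$ both satisfy the Pell recurrence $x_n = 2x_{n-1} + x_{n-2}$ for $n \geqslant 3$ (the candidate inherits it termwise from $\mathcal{P}$), and that they agree at $n = 1$ and $n = 2$ because $c\mathcal{P}_1 + \mathcal{P}_0 = c$ and $c\mathcal{P}_2 + \mathcal{P}_1 = 2c+1$; a one-line induction then gives $\det T_n = c\mathcal{P}_n + \mathcal{P}_{n-1}$ for all $n \geqslant 1$. Specializing $c = 0$ gives $\mathcal{P}_{n-1}$, and $c = 2$ gives $2\mathcal{P}_n + \mathcal{P}_{n-1} = \mathcal{P}_{n+1}$, establishing parts $(1)$ and $(2)$. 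I expect no genuine obstacle here: the only point requiring a little care is keeping the index ranges straight when matching the recurrence for $d_n$ (valid for $n \geqslant 3$) against the closed form, whose inherited recurrence likewise needs $n \geqslant 3$ so that $\mathcal{P}_{n-3}$ is defined, which is exactly why the verification is anchored at the two base cases $n = 1$ and $n = 2$.
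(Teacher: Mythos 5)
Your proposal is correct and follows essentially the same route as the paper: sufficiency by direct substitution, necessity by matching the recursion of Lemma \ref{lem1} against Eq. (\ref{e3-new}) to get the system $2c-a-b=2$, $(c-a)(c-b)=-1$, and then the closed form $c\mathcal{P}_n+\mathcal{P}_{n-1}$ by a two-step induction. The only difference is that you spell out the details (the substitution $u=c-a$, $v=c-b$, the consistency of $\det T_2$, and the induction for the closed form) that the paper dismisses as ``obvious.''
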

\begin{proof} The sufficiency is clear. To prove the necessity, using Lemma
\ref{lem1} we have
$$\det T_n=(2c-a-b)\det T_{n-1}-(c-a)(c-b)\det T_{n-2},$$
and by comparing this with Eq. (\ref{e3-new}), we then obtain the
system of equalities
$$\left\{\begin{array}{l} 2c-a-b=2 \\[0.2cm]
(c-a)(c-b)=-1  \end{array} \right.$$ which has two solutions of
the forms
$$a=c-1\pm\sqrt{2} \ \ \ \ \mbox{and} \ \ \ \
b=c-1\mp\sqrt{2}.$$ The rest of the proof is obvious.
\end{proof}

\begin{corollary}\label{th3}
Let $a, b$ and $c$ be arbitrary complex numbers, and for each
positive integer $n$, let $T_n=T_n(a, b, c)$. Then $\det T_1=c$,
$\det T_2=c+2$ and for $n\geqslant 3$,
\begin{equation}\label{e5-new}
\det T_n=\det T_{n-1}+2\det T_{n-2},
\end{equation}
if and only if
\begin{equation}\label{e6-new}(a, b, c)\in \Big\{\big(c+1,
c-2, c\big), \big(c-2, c+1, c\big)\Big\}.
\end{equation}
Furthermore, if Eq. $(\ref{e6-new})$ holds, then we have $\det
T_n=c\mathcal{J}_n+2\mathcal{J}_{n-1}$. In particular, we have

$(1)$ If $c=1$, then $\det T_n= \mathcal{J}_{n+1}$.

$(2)$ If $c=3$, then $\det T_n= \mathcal{J}_{n+2}$.
\end{corollary}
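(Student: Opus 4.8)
The plan is to follow verbatim the template of Corollaries \ref{th1} and \ref{th2}, since the present statement is the Jacobsthal analogue of those Fibonacci and Pell results and the only structural ingredient is the three-term recurrence for $\det T_n$ supplied by Lemma \ref{lem1}.

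\emph{Sufficiency.} First I would assume one of the two triples, say $(a,b,c)=(c-2,c+1,c)$ (the other is then immediate, either by repeating the computation or by the transpose symmetry $\det T_n(a,b,c)=\det T_n(b,a,c)$), and verify the three displayed conditions directly from Lemma \ref{lem1}. Indeed $\det T_1=c$ is automatic, $\det T_2=c^2-ab=c^2-(c+1)(c-2)=c+2$, and the coefficients in Lemma \ref{lem1} specialise to $2c-a-b=1$ and $-(c-a)(c-b)=2$, which is exactly the recurrence (\ref{e5-new}).

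\emph{Necessity.} Conversely, assuming the three conditions, I would invoke Lemma \ref{lem1}, giving $\det T_n=(2c-a-b)\det T_{n-1}-(c-a)(c-b)\det T_{n-2}$ for every $n\geqslant 3$, and compare with (\ref{e5-new}). Matching coefficients yields the system
$$2c-a-b=1,\qquad (c-a)(c-b)=-2.$$
Setting $u=c-a$, $v=c-b$ turns this into $u+v=1$, $uv=-2$, so $u,v$ are the roots $2,-1$ of $t^2-t-2=0$; the two orderings give precisely $(a,b,c)=(c-2,c+1,c)$ and $(a,b,c)=(c+1,c-2,c)$, i.e.\ (\ref{e6-new}).

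\emph{Closed form and special cases.} Finally, once (\ref{e6-new}) holds, both $\det T_n$ and $c\mathcal{J}_n+2\mathcal{J}_{n-1}$ satisfy the same recurrence $x_n=x_{n-1}+2x_{n-2}$ and agree at $n=1,2$ (using $\mathcal{J}_0=0,\ \mathcal{J}_1=\mathcal{J}_2=1$ one checks $c\mathcal{J}_1+2\mathcal{J}_0=c=\det T_1$ and $c\mathcal{J}_2+2\mathcal{J}_1=c+2=\det T_2$), hence they coincide for all $n$. The special cases then drop out of the Jacobsthal recurrence itself: $\mathcal{J}_{n+1}=\mathcal{J}_n+2\mathcal{J}_{n-1}$ yields case $(1)$ at $c=1$, and $\mathcal{J}_{n+2}=\mathcal{J}_{n+1}+2\mathcal{J}_n=3\mathcal{J}_n+2\mathcal{J}_{n-1}$ yields case $(2)$ at $c=3$. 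I expect no serious obstacle: the computation is entirely parallel to the already-proved Pell corollary and Lemma \ref{lem1} does all the heavy lifting. The only point needing a word of care is the necessity direction, where ``comparing coefficients'' should be read as the observation that the $\det T_2$ condition pins down $ab$ while the recurrence pins down $a+b$, forcing the two quadratic relations above.
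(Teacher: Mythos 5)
Your proof is correct and follows essentially the same route as the paper's: sufficiency by direct verification from Lemma \ref{lem1}, necessity by matching the coefficients of the two second-order recurrences to get the system $2c-a-b=1$, $(c-a)(c-b)=-2$, and the closed form by induction on the common recurrence. You merely spell out the details (the $u,v$ substitution, the initial-value check, and the two special cases) that the paper dismisses as ``clear'' or ``obvious.''
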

\begin{proof} The sufficiency is clear. To prove the necessity, using Lemma
\ref{lem1} we have
$$\det T_n=(2c-a-b)\det T_{n-1}-(c-a)(c-b)\det T_{n-2},$$
and by comparing this with Eq. (\ref{e5-new}), we then obtain the
system of equalities
$$\left\{\begin{array}{l} 2c-a-b=1 \\[0.2cm]
(c-a)(c-b)=-2  \end{array} \right.$$ which has two solutions of
the forms
$$(a, b)\in \big\{(c+1,
c-2), (c-2, c+1)\big\}.$$ The rest of the proof is obvious.
\end{proof}

\begin{corollary}\label{cor-1-new}
Let $\alpha=(\alpha_i)_{i\geqslant 0}$ and
$\beta=(\beta_i)_{i\geqslant 0}$ be two sequences starting with a
common first term. Then there hold:

$(1)$ If $\alpha_i=2^{i+1}-1$ and $\beta_i=2(1-2^{i-1})$, then
$\det P_{\alpha, \beta}(n)= \mathcal{J}_{n+2}$,

$(2)$ If $\alpha_i=2^{i+2}-1$ and $\beta_i=2(2^{i-1}+1)$, then
$\det P_{\alpha, \beta}(n)=\mathcal{J}_{n+3}$.
\end{corollary}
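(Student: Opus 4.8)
The plan is to reduce both parts to Toeplitz determinants that have already been evaluated in Corollary \ref{th3}. The essential observation is that the sequences $\alpha$ and $\beta$ displayed here are exactly the binomial transforms of sequences that are constant after their first term, and Lemma \ref{cor-1} lets me trade the Pascal triangle $P_{\check{\mu},\check{\nu}}(\infty)$ for the equimodular Toeplitz matrix $T_{\mu,\nu}(\infty)$, so that $\det P_{\alpha,\beta}(n)=\det T_{\mu,\nu}(n)$ for every $n$.

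First I would invert the binomial transform using Lemma \ref{lem2}, which says that if $\mu_0=c$ and $\mu_i=a$ for $i\geqslant 1$, then $\check{\mu}_i=c+a(2^i-1)$. In case $(1)$ one writes $\alpha_i=2^{i+1}-1=1+2(2^i-1)$ and $\beta_i=2(1-2^{i-1})=2-2^i=1+(-1)(2^i-1)$, so that $\alpha=\check{\mu}$ and $\beta=\check{\nu}$ with $\mu=(1,2,2,2,\ldots)$ and $\nu=(1,-1,-1,-1,\ldots)$. In case $(2)$ one writes $\alpha_i=2^{i+2}-1=3+4(2^i-1)$ and $\beta_i=2(2^{i-1}+1)=2^i+2=3+1\cdot(2^i-1)$, so that $\alpha=\check{\mu}$ and $\beta=\check{\nu}$ with $\mu=(3,4,4,4,\ldots)$ and $\nu=(3,1,1,1,\ldots)$.

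Next I would rewrite the equimodular Toeplitz matrix in the notation $T_{N}(a,b,c)$ of Lemma \ref{lem1}, recalling that $T_{n+1}(a,b,c)=T_{\mu,\nu}(n)$ precisely when the first column of $T_{\mu,\nu}(n)$ is $\mu=(c,b,b,\ldots)$ and the first row is $\nu=(c,a,a,\ldots)$. This gives $T_{\mu,\nu}(n)=T_{n+1}(-1,2,1)$ in case $(1)$ and $T_{\mu,\nu}(n)=T_{n+1}(1,4,3)$ in case $(2)$. In each case the parameters fall into one of the two admissible triples of Corollary \ref{th3}: for $c=1$ the triple $(c-2,c+1,c)=(-1,2,1)$, and for $c=3$ the triple $(c-2,c+1,c)=(1,4,3)$. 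I would then invoke Corollary \ref{th3} directly: case $(1)$ of that corollary ($c=1$) yields $\det T_{n+1}(-1,2,1)=\mathcal{J}_{(n+1)+1}=\mathcal{J}_{n+2}$, while case $(2)$ ($c=3$) yields $\det T_{n+1}(1,4,3)=\mathcal{J}_{(n+1)+2}=\mathcal{J}_{n+3}$. Combining with $\det P_{\alpha,\beta}(n)=\det T_{\mu,\nu}(n)$ finishes both parts.

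The only delicate point I anticipate is the inverse binomial transform in the second paragraph: one must correctly match each closed-form expression ($2^{i+1}-1$, $2-2^i$, and so on) against the pattern $c+a(2^i-1)$ of Lemma \ref{lem2}, and in particular verify the boundary value at $i=0$, where for instance $2(1-2^{-1})=1$ and $2(2^{-1}+1)=3$ must coincide with the common first term $\gamma$ (namely $1$ and $3$ respectively). Once $\mu$ and $\nu$ are identified, everything downstream is a mechanical appeal to the cited lemmas, so I would expect no further obstacle.
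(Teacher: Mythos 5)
Your proposal is correct and follows exactly the route the paper intends: its proof is the single sentence that both parts ``follow directly from Lemma \ref{cor-1}, Lemma \ref{lem2} and Corollary \ref{th3},'' and you have filled in precisely those steps, with the identifications $\mu=(1,2,2,\ldots)$, $\nu=(1,-1,-1,\ldots)$ and $\mu=(3,4,4,\ldots)$, $\nu=(3,1,1,1,\ldots)$ and the resulting triples $(-1,2,1)$ and $(1,4,3)$ all checking out. No gaps.
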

\begin{proof}
Both statements follow directly from Lemma \ref{cor-1}, Lemma
\ref{lem2} and Corollary \ref{th3}.
\end{proof}

\end{document}